

\documentclass[a4paper, 10pt]{amsart}
\usepackage[utf8]{inputenc} 
\usepackage{tikz-cd}
\usepackage{latexsym,amssymb,amsfonts, amsthm, amsmath}
\usepackage{graphicx}
\usepackage{amssymb}
\usepackage{amsmath}
\usepackage{caption}
\usepackage{float}
\usepackage{xcolor}

\newtheorem{thm}{Theorem}[section]
\newtheorem{lem}[thm]{Lemma}
\newtheorem{cor}[thm]{Corollary}
\newtheorem{prop}[thm]{Proposition}

\newtheorem{defn}[thm]{Definition}
\newtheorem{exa}[thm]{Example}

\newtheorem{rem}[thm]{Remark}

\title[Orthogonal and oriented Fano planes]{Orthogonal and oriented Fano planes, \\ triangular embeddings of $K_7,$ and geometrical representations of the Frobenius group $F_{21}$}
\author[S. Costa]{Simone Costa}
\address{DICATAM - Sez. Matematica, Universit\`a degli Studi di Brescia, Brescia 25123, Italy}
\email{simone.costa@unibs.it}
\author[M. Pavone]{Marco Pavone}
\address{Dipartimento di Ingegneria, Universit\`a degli Studi di Palermo, Palermo 90128, Italy}
\email{marco.pavone@unipa.it}
\keywords{Orthogonal Fano Planes, Oriented Fano Plane, Frobenius Group $F_{21}$, Toroidal Embedding, Kirkman triple system, STS($15$), KTS($15$)}
\subjclass[2010]{05C25, 05B07, 05C76, 05C10}
\begin{document}

\begin{abstract}
In this paper we present some geometrical representations of the Frobenius group of order $21$ (henceforth, $F_{21}$). The main focus is on investigating the group of common automorphisms of two orthogonal Fano planes and the automorphism group of a suitably oriented Fano plane. We show that both groups are isomorphic to $F_{21},$ independently of the choice of the two orthogonal Fano planes and of the choice of the orientation.

\smallskip
We show, moreover, that any triangular embedding of the complete graph $K_7$ into a surface is isomorphic to the classical toroidal biembedding and hence is face $2$-colorable, with the two color classes defining a pair of orthogonal Fano planes. As a consequence, we show that, for any triangular embedding of $K_7$ into a surface, the group of the automorphisms that preserve the color classes is the Frobenius group of order $21.$

\smallskip
This way we provide three geometrical representations of $F_{21}$. Also, we apply the representation in terms of two orthogonal Fano planes to give an alternative proof that $F_{21}$ is the automorphism group of the Kirkman triple system of order $15$ that is usually denoted as \#61.
\end{abstract}
\maketitle

\section{Introduction}
In this paper we are interested in finding some description of the non-abelian group $F_{21}$ of order $21$ as the group of symmetries of some geometrical object (just like, for instance, $S_3$ is the group of symmetries of the equilateral triangle), or as the group of automorphisms of some combinatorial object (recall, for instance, that the non-abelian group $F_{39}$ of order $39$ is the group of automorphisms of the cyclic STS($13$)). Finding such representations is not in general a surprise, since, because of Frucht's theorem, any group $G$ is the automorphism group of a suitable simple graph $\Gamma$. On the other hand, in the case where $G=F_{21},$ Frucht's theorem requires a quite big graph (with more than $100$ vertices). The goal of this paper is to provide three different (but related) geometrical representations of $F_{21},$ each of which only uses $7$ vertices.

\smallskip
In Section \ref{SecOrthogonal} we show that $F_{21}$ is the group of common automorphisms of two orthogonal Fano planes. Orthogonal STSs were first introduced by O'Shaughnessy \cite{Shaughnessy} as a means of constructing Room squares.
Formally, the notion of a Fano plane and the orthogonality between two Fano planes can be defined as follows.

\begin{defn}\label{definitionsts} \emph{(see, e.g., \cite{Anderson3, BJL, CRC, ColbournRosa}) A \emph{Steiner triple system} of order $v,$ denoted STS($v$), is a pair ($\mathcal{V}, \mathcal{B}$), where $\mathcal{V}$ is a set of $v$ elements (\emph{points}), and $\mathcal{B}$ is a collection of (unordered) triples of elements of $\mathcal{V}$ (\emph{blocks}), with the property that each (unordered) pair of points occurs as a subset of precisely one block in $\mathcal{B}.$ A Steiner triple system of order $7$ is also called \emph{Fano plane}.}

\emph{An \emph{isomorphism} from an STS ($\mathcal{V}_1, \mathcal{B}_1$) to an STS ($\mathcal{V}_2, \mathcal{B}_2$) is a one-to-one map $\sigma$ from $\mathcal{V}_1$ onto $\mathcal{V}_2$ that preserves triples: more precisely, $T=\{x,y,z\} \in \mathcal{B}_1$ if and only if $\sigma(T)=\{\sigma(x),\sigma(y),\sigma(z)\} \in \mathcal{B}_2.$ An \emph{automorphism} is an isomorphism from an STS to itself. The group of all the automorphisms of a Steiner triple system $\mathcal{D} = (\mathcal{V}, \mathcal{B})$ is denoted by $\operatorname{Aut}(\mathcal{D}).$}
\end{defn}

\begin{defn}\label{definitionorthogonallsts} \emph{Two STSs ($\mathcal{V},\mathcal{B}_1$) and ($\mathcal{V},\mathcal{B}_2$) on the same point-set are \emph{disjoint} if $\mathcal{B}_1\cap\mathcal{B}_2 = \emptyset,$ that is, if they have no blocks in common. They are said to be \emph{orthogonal} if, in addition, they satisfy the following property:
$$(\{x,y,z\},\{u,v,z\} \in \mathcal{B}_1, \: \{x,y,a\},\{u,v,b\} \in \mathcal{B}_2) \Longrightarrow a \neq b.$$}
\end{defn}
\medskip
In the special case where the order $v$ is equal to $7,$ it is easy to see that two Fano planes are orthogonal if and only if they are disjoint.

\begin{exa}\label{ex1}
\emph{Let us consider the family $\mathcal{\bar{B}}_1$ of the triples that are obtained by translating $\{0,1,3\}$ (mod $7$). More precisely,
$$ \mathcal{\bar{B}}_1=\{\{0, 1, 3\},  \{1, 2, 4\},\{2, 3, 5\},\{3, 4, 6\},  \{4, 5,0\},  \{5, 6,1\}, \{6,0, 2\}\}.$$}

\emph{Similarly, we define $\mathcal{\bar{B}}_2$ as the set of translates (mod $7$) of the triple $\{0,1,5\}$ or, more precisely,
$$ \mathcal{\bar{B}}_2=\{\{ 0, 1, 5 \},\{ 1, 2, 6 \},\{ 2, 3,0 \},\{ 3,4,1\},\{ 4,5,2 \},\{ 5,6,3 \},\{ 6,0,4 \}\}.$$}

\emph{Since $\mathcal{\bar{B}}_1\cap\mathcal{\bar{B}}_2=\emptyset,$ it is immediate that $\bar{\mathcal{F}}_1=(\mathcal{V},\mathcal{\bar{B}}_1)$ and $\bar{\mathcal{F}}_2=(\mathcal{V},\mathcal{\bar{B}}_2)$ are orthogonal Fano planes on the common point-set $\mathcal{V}=\{0,1,2,3,4,5,6\}.$}
\end{exa}


\begin{exa}\label{lambdatwo}
\emph{If we identify the point-set $\mathcal{V}$ in Example \ref{ex1} with $\mathbb{Z}_7,$ then one can readily verify by inspection that the map $\lambda_2(x) = 2x$ (mod $7$) is a common automorphism of the orthogonal Fano planes $\bar{\mathcal{F}}_1$ and $\bar{\mathcal{F}}_2$.}
\end{exa}

In the final part of Section \ref{SecOrthogonal}, we apply the main result on the group of common automorphisms of two orthogonal Fano planes
to show that $F_{21}$ is also the automorphism group of a notable Kirkman triple system of order $15.$

\smallskip
Subsequently, in Section \ref{SecOriented}, we investigate the automorphism group of an oriented Fano plane, and show that it is isomorphic to $F_{21}$ as well. Formally, an oriented Fano plane and its automorphism group can be defined as follows.

\begin{defn}\label{oriented}
\emph{A Fano plane $\mathcal{F}=(\mathcal{V},\mathcal{B})$ is \emph{oriented} if there exists a binary antisymmetric relation $\rightarrow$ on $\mathcal{V}$ such that
\begin{itemize}
\item for any triple $\{x_1,x_2,x_3\}\in \mathcal{B},$ either $x_1\rightarrow x_2\rightarrow x_3\rightarrow x_1$ or $x_1\rightarrow x_3\rightarrow x_2\rightarrow x_1$ (in particular, for any distinct $x,y$ in $\mathcal{V}$, either $x\rightarrow y$ or $y \rightarrow x$);
\item for any $x\in \mathcal{V}$, if $\{x,y_1,z_1\}$, $\{x,y_2,z_2\},$ and $\{x,y_3,z_3\}$ are the three triples in $\mathcal{B}$ through $x,$ and if $x\rightarrow y_1$, $x\rightarrow y_2,$ and $x\rightarrow y_3,$ then $\{y_1,y_2,y_3\}\in \mathcal{B}$.
\end{itemize}}
\emph{If this is the case, then we say that the relation  $\rightarrow$ is an \emph{orientation} on $\mathcal{F},$ and that the pair $(\mathcal{F}, \rightarrow)$ is an \emph{oriented Fano plane}. }
\end{defn}

\begin{exa}\label{exoriented}
\emph{Let $\bar{\mathcal{F}}_1=(\mathcal{V},\mathcal{\bar{B}}_1)$ be the Fano plane introduced in Example \ref{ex1}, and let us identify again the point-set $\mathcal{V}=\{0,1,2,3,4,5,6\}$ with $\mathbb{Z}_7.$ We define an orientation $\rightarrow$ on $\bar{\mathcal{F}}_1$ by first defining $0\rightarrow 1\rightarrow 3\rightarrow 0$ on the base triple $\{0, 1, 3\},$ and then extending the orientation by translation (mod $7$) on the other triples. In other words, for any $x$ in $\mathcal{V},$ we let $x\rightarrow x+1\rightarrow x+3\rightarrow x.$}

\smallskip
\emph{The definition is well posed, since any two distinct points in $\mathcal{V}$ belong to exactly one triple in $\mathcal{\bar{B}}_1.$  Note that, equivalently, for any two distinct points $x,y$ in $\mathcal{V},$ $x \rightarrow y$ if and only if $y-x$ is a non-zero quadratic residue (mod $7$), that is, if and only if $y-x \in \{1,2,4\}$ (mod $7$), that is, if and only if $y \in \{x+1,x+2,x+4\}$ (mod $7$). Therefore, for any $x\in \mathcal{V}$, the orientation on the three blocks through $x$ is $$x\rightarrow x+1\rightarrow x+3\rightarrow x$$ $$x\rightarrow x+2\rightarrow x+6\rightarrow x$$ $$x\rightarrow x+4\rightarrow x+5\rightarrow x.$$}

\smallskip
\emph{Now $\{x+1,x+2,x+4\},$ being a translate of the block $\{1,2,4\},$ is a block in $\mathcal{\bar{B}}_1,$  hence it follows, by Definition \ref{oriented}, that $(\bar{\mathcal{F}}_1, \rightarrow)$ is an oriented Fano plane.}

\smallskip
\emph{Note that the orientation $\rightarrow$ on $\bar{\mathcal{F}}_1$ can also be represented graphically as in Figure \ref{orientedfano2}. For the sake of simplicity, we indicate only one arrow mark for each triple, but it is understood that the orientation is extended cyclically to all the $2$-subsets of the triple. For instance, the arrow mark at the base of the triangle indicates the orientation $2\rightarrow 3\rightarrow 5\rightarrow 2$ on the triple $\{2, 3, 5\}.$}
\end{exa}

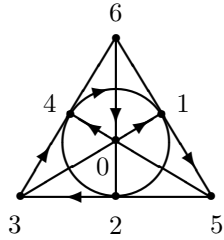
\begin{figure}[H]
\begin{center}
\setlength{\unitlength}{0.19mm}
\begin{picture}(100,160)(0,-30)

\thicklines
\put(24.7,39){\vector(2,3){0}}
\put(35,4){\vector(-1,0){0}}
\put(48.7,56.36){\vector(-3,2){0}}
\put(65,78.7){\vector(4,1){0}}
\put(69.95,55){\vector(0,-1){0}}
\put(93,57.4){\vector(3,2){0}}
\put(125.2,24){\vector(2,-3){0}}

\put(70,43.4){\circle*{5}}
\put(4,4.5){\circle*{5}}
\put(70,4.3){\circle*{5}}
\put(136,4.5){\circle*{5}}
\put(70.2,114.6){\circle*{5}}
\put(38.5,62){\circle*{5}}
\put(101.5,62){\circle*{5}}

\put(70,42){\circle{75}}

\put(70,4){\line(0,1){112}}
\put(3,3){\line(5,3){100}}
\put(3,4){\line(3,5){66}}
\put(137,3){\line(-5,3){100}}
\put(137,4){\line(-3,5){66}}
\put(2,4){\line(1,0){136}}

\put(61,31){\makebox(0,0)[t]{0}}
\put(70,-10){\makebox(0,0)[t]{2}}
\put(0,-9){\makebox(0,0)[t]{3}}
\put(140,-9){\makebox(0,0)[t]{5}}
\put(70,127){\makebox(0,0)[b]{6}}
\put(112,69){\makebox(0,0)[l]{1}}
\put(29,69){\makebox(0,0)[r]{4}}

\end{picture}
\end{center}
\caption{The oriented Fano plane.}
\label{orientedfano2}
\end{figure}

\begin{defn}\label{automoriented}
\emph{Let $(\mathcal{F}, \rightarrow)$ be an oriented Fano plane with point-set $\mathcal{V}$, and let $\sigma$ be an automorphism of $\mathcal{F}$.
We denote by either $\sigma(\rightarrow)$ or $\rightarrow_{\sigma}$ the orientation on $\mathcal{F}$ induced by $\sigma$ and $\rightarrow$ as follows:
for any $x,y$ in  $\mathcal{V},$ $\sigma(x) \rightarrow_{\sigma} \sigma(y)$ if and only if $x \rightarrow y$.
We say that $\sigma$ is an \emph{automorphism} of $(\mathcal{F}, \rightarrow)$ if $\sigma$ preserves the orientation $\rightarrow$, that is, if $\sigma(\rightarrow) = \,\rightarrow$ (equivalently, for any $x,y$ in  $\mathcal{V},$ $\sigma(x) \rightarrow \sigma(y)$ if and only if $x \rightarrow y$).}
\end{defn}

\begin{rem}\label{120degree} \emph{The map $\lambda_2(x) = 2x$ (mod $7$) is an automorphism of the oriented Fano plane $(\bar{\mathcal{F}}_1, \rightarrow)$ defined in Example \ref{exoriented}, since the set $\{1,2,4\}$ of quadratic residues (mod $7$) is invariant under $\lambda_2.$ Note that this map can be visualized in Figure \ref{orientedfano2} as a clockwise $120$-degree rotation of the triangle around its center. Moreover, the map $\tau_1(x)=x+1 \pmod 7$ is an automorphism of $(\bar{\mathcal{F}}_1, \rightarrow)$ as well, by definition of $\rightarrow$.}
\end{rem}


The concept of an oriented Fano plane has been originally introduced as a mnemonic rule for the multiplication table of the octonions. In \cite[\S 4.4]{Killgore}, Peter L. Killgore
investigated the group of the automorphisms of the octonions that permute the imaginary basis units $e_1, e_2,e_3,e_4,e_5,e_6,e_7$, and proved that this group is a nonabelian group of order $21$ and is isomorphic to the automorphism group of an oriented Fano plane.
In Section \ref{SecOriented} we give an alternative and more direct proof of this result, by constructing a one-to-one correspondence between the family of the orientations on a given Fano plane $\mathcal{F}$ and the family of the Fano planes orthogonal to $\mathcal{F}.$

\smallskip
In Section \ref{SecEmbeddings}, in order to provide an additional geometrical representation of $F_{21}$, we study the triangular embeddings of the complete graph $K_7$ (i.e., those whose faces have all length three).
\begin{figure}[H]
\centering
\includegraphics[width=240pt,height=160pt]{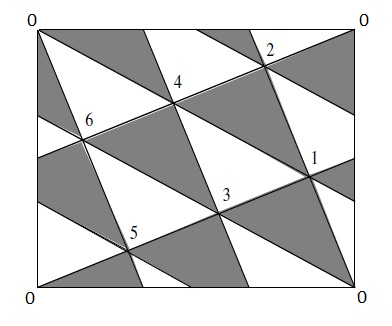}
\caption{A biembedding of $K_7$ into the $2$-torus. The picture is taken from \cite{JS}.}
\label{fig1}
\end{figure}

Here graphs are finite, undirected and simple; loops and multiple edges are not allowed. A \emph{surface} is a compact connected 2-manifold without boundary. Let us recall that a graph embedding into a surface can be defined as follows \cite{Moh, MT}.

\begin{defn}
\emph{Given a graph $\Gamma$ and a surface $\Sigma$, an \emph{embedding} of $\Gamma$ in $\Sigma$ is a continuous injective mapping $\psi: \Gamma \rightarrow \Sigma$, where $\Gamma$ is viewed as a $1$-dimensional simplicial complex, with the usual topology.}
\end{defn}

The connected components of $\Sigma \setminus \psi(\Gamma)$ are called $\psi$-\emph{faces}. If every $\psi$-face is homeomorphic to an open disc, then the embedding $\psi$ is said to be \emph{cellular}. Also, we say that a circuit $T=(x_1,x_2,x_3,\dots,x_{\ell})$ of $\Gamma$ is a \emph{face} of length $\ell$ (induced by the embedding $\psi$) if $\psi([x_1,x_2] \cup [x_2,x_3] \cup \ldots \cup [x_{\ell},x_1])$ is the boundary of a $\psi$-face (where $[x_i,x_{i+1}]$ denotes the arc with endpoints $x_i$ and $x_{i+1}$ in the $1$-dimensional simplicial complex). For instance, the faces of the classical toroidal embedding of $K_7$ in Figure \ref{fig1} (see also \cite{A, GGS, JS}) are precisely the blocks of the orthogonal Fano planes $\bar{\mathcal{F}}_1$ and $\bar{\mathcal{F}}_2$ introduced in Example \ref{ex1}.

\begin{defn}\label{defembiso}
\emph{We say that two graph embeddings $\psi: \Gamma \rightarrow \Sigma$ and $\psi': \Gamma' \rightarrow \Sigma'$ are \emph{isomorphic} whenever there exists a graph isomorphism $\sigma: \Gamma\rightarrow \Gamma'$ such that $\sigma(T)$ is a face in $\Gamma'$ (induced by $\psi'$) if and only if $T$ is a face in $\Gamma$ (induced by $\psi$). The map $\sigma$ will be called an \emph{embedding isomorphism} (\emph{embedding automorphism} in the case where $\psi' =\psi$).}
\end{defn}

Note that, in Definition \ref{defembiso}, $\sigma$ is a graph isomorphism and not a surface homeomorphism.

\smallskip
Let us recall that the classical embedding of $K_7$ into the $2$-torus admits a face $2$-coloring, that is, a coloration of the faces with two colors and with the property that any edge belongs to two faces of different colors. In Figure \ref{fig1} we color in grey the face $\{0,1,3\}$ and its translates (which are the elements of $\mathcal{\bar{B}}_1$ in Example \ref{ex1}), and in white the face $\{0,1,5\}$ and its translates (which are the elements of $\mathcal{\bar{B}}_2$). We also recall that, in general, a graph embedding that admits such a coloring is said to be a \emph{biembedding}. 

\smallskip
We prove that any triangular embedding of $K_7$ into a surface is isomorphic to the classical toroidal one and hence is face $2$-colorable. Moreover, we show that, for any such embedding $\psi,$ the group $\operatorname{Aut}(\psi,Col)$ of the embedding automorphisms that fix the color classes is isomorphic to the automorphism group of two orthogonal Fano planes and hence is $F_{21}$.

\section{Orthogonal Fano Planes}\label{SecOrthogonal}\smallskip
\subsection{Automorphisms of two orthogonal Fano planes} $ $\\

In this section we aim to determine the group of common automorphisms of two orthogonal Fano planes.

\smallskip
First of all, we introduce some notation and terminology. From now on, we identify the point-set of two orthogonal Fano planes with $\mathbb{Z}_7 = \{0,1,2,3,4,5,6\},$ and we often indicate the triple $\{x,y,z\},$ briefly, by $xyz$ (e.g., $124$ indicates the triple $\{1,2,4\}\subseteq \mathbb{Z}_7$). In the special case where the triple $xyz$ belongs to the family of blocks of a Fano plane $\mathcal{F},$ we also say, for short, that $\mathcal{F}$ contains the triple $xyz,$ or that $xyz$ is a triple in $\mathcal{F}.$

\smallskip
We begin by proving that the group of common automorphisms of two orthogonal Fano planes does not depend on the choice of the two Fano planes.

\begin{prop}\label{p1}
There exist precisely eight orthogonal Fano planes to a given Fano plane.
Moreover, if $\mathcal{F}$ is a Fano plane with point-set $\mathcal{V}$, and if $\mathcal{S}_1,$ $\mathcal{S}_2$ are two distinct Fano planes orthogonal to $\mathcal{F},$ then there exists a permutation of $\mathcal{V}$ that is an automorphism of $\mathcal{F}$ and also an isomorphism between $\mathcal{S}_1$ and $\mathcal{S}_2$.
\end{prop}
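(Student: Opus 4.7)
The plan is to split the proof into two parts: first, establishing that there are precisely $8$ Fano planes orthogonal to $\mathcal{F}$, and second, proving that $\operatorname{Aut}(\mathcal{F})$ acts transitively on them. Since any two Fano planes on a $7$-element point-set are isomorphic, both claims depend only on the isomorphism type, so I would reduce to the reference plane $\bar{\mathcal{F}}_1$ of Example \ref{ex1}: any isomorphism $\bar{\mathcal{F}}_1 \to \mathcal{F}$ carries orthogonal mates to orthogonal mates bijectively.

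For the first part, I propose a double-counting/moment argument. Let $m_k$ denote the number of Fano planes distinct from $\mathcal{F}$ sharing exactly $k$ blocks with $\mathcal{F}$; since there are $30$ Fano planes on a labeled $7$-point set, $\sum_k m_k = 29$. I would then evaluate $\sum_k \binom{k}{j} m_k$ for $j = 1, 2, 3, 4$ by double-counting pairs $(\mathcal{S}, T)$ in which $\mathcal{S} \neq \mathcal{F}$ is a Fano plane and $T$ is a $j$-subset of the common blocks of $\mathcal{F}$ and $\mathcal{S}$. The required inputs are: every triple of $\mathbb{Z}_7$ is a block in exactly $30 \cdot 7 / 35 = 6$ Fano planes; every ``pencil'' of three blocks through a common point $z$ extends to exactly $2$ Fano planes (uniformly, by the orbit count $30/15 = 2$ on the $15$ pair-partitions of $\mathcal{V}\setminus\{z\}$); any $3$ non-concurrent blocks of $\mathcal{F}$ force a unique extension to $\mathcal{F}$ itself via a ``missing point'' analysis on the $7$th point of $\mathcal{V}$; and any $4$ blocks of $\mathcal{F}$ also force $\mathcal{F}$ (the pencil-containing case using the edge-disjointness of the two triangle decompositions of $K_6$ minus a $1$-factor, the complement-of-pencil case using that the residual pairs at the ``missing'' vertex are forced). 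These four linear relations have unique non-negative integer solution $(m_0, m_1, m_2, m_3, m_4) = (8, 14, 0, 7, 0)$ with $m_5 = m_6 = 0$, giving $m_0 = 8$.

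For transitivity, fix an orthogonal mate $\mathcal{S}_0$ of $\mathcal{F}$ and take $\mathcal{S}_0 = \bar{\mathcal{F}}_2$ after the initial reduction. By Example \ref{lambdatwo} together with the translation invariance of $\bar{\mathcal{F}}_2$, the subgroup $H = \langle \tau_1, \lambda_2 \rangle \cong F_{21}$ of order $21$ lies inside $\operatorname{Aut}(\mathcal{F}) \cap \operatorname{Aut}(\mathcal{S}_0)$. Since $F_{21}$ is a maximal subgroup of $\operatorname{Aut}(\mathcal{F}) \cong PGL(3,2)$, the stabilizer of $\mathcal{S}_0$ in $\operatorname{Aut}(\mathcal{F})$ is either $H$ itself or the whole of $\operatorname{Aut}(\mathcal{F})$; the latter would give $\operatorname{Aut}(\mathcal{F}) = \operatorname{Aut}(\mathcal{S}_0)$ (equal orders $168$), whence $\operatorname{Aut}(\mathcal{F})$ would preserve the block set of $\mathcal{S}_0$. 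But $\operatorname{Aut}(\mathcal{F})$ has exactly two orbits on $3$-subsets of $\mathbb{Z}_7$---the $7$ blocks of $\mathcal{F}$ and the $28$ non-blocks---so the only $\operatorname{Aut}(\mathcal{F})$-invariant $7$-family of triples is the block set of $\mathcal{F}$ itself, forcing $\mathcal{S}_0 = \mathcal{F}$, a contradiction. Hence the stabilizer equals $H$, the orbit of $\mathcal{S}_0$ has size $168/21 = 8$, and combined with $m_0 = 8$ this orbit exhausts the orthogonal mates, proving transitivity.

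The main obstacle is the $j=4$ moment: ruling out Fano planes that share $4$ or more blocks with $\mathcal{F}$ ultimately rests on the rigidity that the two triangle decompositions of $K_6$ minus a perfect matching are edge-disjoint. This can be verified by enumerating the $8$ triangles (each being a transversal of the three matched pairs) and checking that each such triangle lies in exactly one of the two decompositions.
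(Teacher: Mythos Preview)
Your proof is correct but follows a very different path from the paper's. The paper argues directly and constructively: after normalising $\mathcal{F}=\bar{\mathcal{F}}_1$, an orthogonal mate must contain a block $01x$ with $x\in\{2,4,5,6\}$ and then a block $x3y$ with two choices for $y$, and one checks these choices force the whole mate, giving $4\cdot 2=8$; for transitivity the paper writes down explicit elements of $\operatorname{Aut}(\mathcal{F})$ (first one fixing $0,1$ and sending $x'\mapsto x$, then the involution $(0\,1)(y\,y')$) carrying $\mathcal{S}_2$ to $\mathcal{S}_1$. By contrast, your moment method computes the entire intersection spectrum $(m_0,\dots,m_6)=(8,14,0,7,0,0,0)$, which is strictly more information than the paper extracts, and your transitivity argument via orbit--stabiliser in effect proves Theorem~\ref{OrthogonalGroup} simultaneously (the stabiliser of an orthogonal mate is exactly $F_{21}$), so your logical flow merges Proposition~\ref{p1} and Theorem~\ref{OrthogonalGroup} rather than deriving the latter from the former. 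The cost is that you import the maximality of $F_{21}$ in $\operatorname{GL}(3,2)\cong \operatorname{PSL}(2,7)$ as an external structural fact, whereas the paper's argument is entirely self-contained and exhibits the required isomorphisms explicitly. Both approaches are clean; the paper's is more elementary, yours is more informative.
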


\proof Let $\mathcal{F}$ be a given Fano plane, with point-set $\mathcal{V}$. Up to isomorphism, we may assume that $\mathcal{V} = \mathbb{Z}_7 = \{0,1,2,3,4,5,6\},$ and that the seven blocks in $\mathcal{F}$ are $013, 124, 235, 346,$ $450, 561, 602$ (that is, $\mathcal{F}$ is identified with the Fano plane $\bar{\mathcal{F}}_1$ introduced in Example \ref{ex1}).

\smallskip
Any Fano plane orthogonal to $\mathcal{F}$ contains a triple $01x$, with $x$ in $\{2,4,5,6\}$, and a triple $x3y$, where $y$ can only be one of
the two points different from $0,1,3,x$ and from the third point in the same triple with $x$ and $3$ in $\mathcal{F}$. The choice of $x$ and $y$
determines a unique Fano plane $\mathcal{S}$ orthogonal to $\mathcal{F}$. Indeed, up to permutation, we may assume that $x=5.$ Then $y \in \{4,6\},$ say $y=6$ (resp., $y=4$). In this case, $\mathcal{S}$ contains the triples $01x=015, \, x3y=536,$ and $524$ (resp., $015, \, 534,$ and $526$). Since $\mathcal{F}$ contains $602$ (resp., $124$), $\mathcal{S}$ necessarily contains $604$ and $023$ (resp., $146$ and $123$), and, finally, $126$ and $134$ (resp., $024$ and $036$). Hence $\mathcal{S}$ is uniquely determined by $x$ and $y,$ as claimed.
Since there are four possible choices for $x$ and two possible choices for $y$, it follows that there exist precisely eight orthogonal Fano planes to the given Fano plane $\mathcal{F}$.

\smallskip
Now let $\mathcal{S}_1$ and $\mathcal{S}_2$ be two distinct Fano planes orthogonal to $\mathcal{F}$. If $\mathcal{S}_1$ contains $01x$ and $\mathcal{S}_2$ contains $01x'$, for some $x,x'$ in $\{2,4,5,6\}$, then we may assume that $x'=x$, else we replace $\mathcal{S}_2$ by $\sigma(\mathcal{S}_2)$, where $\sigma$ is the unique automorphism of
$\mathcal{F}$ that fixes $0$ and $1$ and maps $x'$ to $x$. Note that $\sigma(\mathcal{S}_2)$ is orthogonal to $\mathcal{F}$, since $\mathcal{S}_2$ is orthogonal to $\mathcal{F}$
and $\sigma(\mathcal{F})=\mathcal{F}.$ Hence we may assume that $\mathcal{S}_1$ and $\mathcal{S}_2$ are precisely the only two Fano planes that
are orthogonal to $\mathcal{F}$ and contain $01x$. If $\mathcal{S}_1$ contains $x3y$ and $\mathcal{S}_2$ contains $x3y'$ (with $y'$ necessarily different from $y$),
then it is easy to show that the permutation $(0 \:1)(y \:y')$ is an automorphism of $\mathcal{F}$ and an isomorphism between $\mathcal{S}_1$ and $\mathcal{S}_2$. This finally proves our claim.
\endproof

\begin{cor}\label{c1}
The group of common automorphisms of two orthogonal Fano planes does not depend on the choice of the two Fano planes.
\end{cor}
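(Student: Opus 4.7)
The plan is to deduce the corollary almost immediately from Proposition \ref{p1} via a two-step reduction: first, reduce to the case where the two pairs of orthogonal Fano planes share a common first component, and then use the isomorphism provided by Proposition \ref{p1} to conjugate one common automorphism group onto the other.

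For the first step, suppose $(\mathcal{F}_1,\mathcal{F}_2)$ and $(\mathcal{F}'_1,\mathcal{F}'_2)$ are two arbitrary pairs of orthogonal Fano planes, possibly on different point-sets $\mathcal{V}$ and $\mathcal{V}'$. Since every Fano plane is isomorphic to the unique STS($7$), I would fix an isomorphism $\phi:\mathcal{F}_1\to\mathcal{F}'_1$. The image $\phi(\mathcal{F}_2)$ is then a Fano plane on $\mathcal{V}'$ that is orthogonal to $\mathcal{F}'_1$, because the defining condition of orthogonality is preserved under any isomorphism acting simultaneously on both component planes. Conjugation by $\phi$ gives a group isomorphism between $\operatorname{Aut}(\mathcal{F}_1)\cap\operatorname{Aut}(\mathcal{F}_2)$ and $\operatorname{Aut}(\mathcal{F}'_1)\cap\operatorname{Aut}(\phi(\mathcal{F}_2))$, so without loss of generality I may replace the first pair by $(\mathcal{F}'_1,\phi(\mathcal{F}_2))$, i.e.\ I may assume that both pairs share the same first component $\mathcal{F}:=\mathcal{F}'_1$.

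For the second step, I now have two Fano planes $\mathcal{S}_1:=\phi(\mathcal{F}_2)$ and $\mathcal{S}_2:=\mathcal{F}'_2$, both orthogonal to the same Fano plane $\mathcal{F}$. If $\mathcal{S}_1=\mathcal{S}_2$ there is nothing to prove; otherwise, Proposition \ref{p1} furnishes a permutation $\sigma$ of the common point-set that simultaneously is an automorphism of $\mathcal{F}$ and an isomorphism from $\mathcal{S}_1$ to $\mathcal{S}_2$. Conjugation by $\sigma$ then sends $\operatorname{Aut}(\mathcal{F})\cap\operatorname{Aut}(\mathcal{S}_1)$ bijectively onto $\operatorname{Aut}(\mathcal{F})\cap\operatorname{Aut}(\mathcal{S}_2)$: indeed, if $\alpha$ fixes (as a set of blocks) both $\mathcal{F}$ and $\mathcal{S}_1$, then $\sigma\alpha\sigma^{-1}$ fixes $\sigma(\mathcal{F})=\mathcal{F}$ and $\sigma(\mathcal{S}_1)=\mathcal{S}_2$.

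There is essentially no obstacle here, since the genuine work (classifying the eight orthogonal mates and exhibiting an automorphism of $\mathcal{F}$ moving any one to any other) has already been carried out in Proposition \ref{p1}. The only points to state carefully are that orthogonality and the common-automorphism group transport correctly under conjugation; both are routine verifications from Definitions \ref{definitionsts} and \ref{definitionorthogonallsts}.
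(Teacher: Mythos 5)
Your proof is correct and follows essentially the same route as the paper: reduce to the case of a common first component (the paper says "Since the Fano plane is unique up to isomorphism, it suffices to show..."), then invoke Proposition \ref{p1} and conjugate. You simply spell out the conjugation details that the paper leaves implicit.
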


\proof Since the Fano plane is unique up to isomorphism, it suffices to show that if $(\mathcal{F}, \mathcal{S}_1)$ and $(\mathcal{F}, \mathcal{S}_2)$ are two
pairs of orthogonal Fano planes, then the group of common automorphisms of $\mathcal{F}$ and $\mathcal{S}_1$ is (isomorphic to) the group of
common automorphisms of $\mathcal{F}$ and $\mathcal{S}_2$. This is now, in turn, an immediate consequence of Proposition \ref{p1}.
\endproof

\begin{thm}\label{OrthogonalGroup}
The group of common automorphisms of two orthogonal Fano planes is isomorphic to the Frobenius group $F_{21}$ of order $21$.
\end{thm}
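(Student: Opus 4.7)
My plan is to combine the counting information already extracted in Proposition \ref{p1} with the standard fact that $|\operatorname{Aut}(\mathcal{F})| = 168$ for any Fano plane $\mathcal{F}$, and to exhibit an explicit non-abelian subgroup of order $21$ in the common automorphism group.

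First I would invoke Corollary \ref{c1} to reduce to the specific pair $(\bar{\mathcal{F}}_1, \bar{\mathcal{F}}_2)$ of Example \ref{ex1}. Examples \ref{ex1} and \ref{lambdatwo} already provide two common automorphisms: the translation $\tau_1 \colon x \mapsto x+1$ (which preserves $\bar{\mathcal{F}}_i$ because each $\mathcal{\bar{B}}_i$ is defined as a set of translates of a base block) and the multiplication $\lambda_2 \colon x \mapsto 2x$. Since $\tau_1$ has order $7$, $\lambda_2$ has order $3$, and a short calculation gives $\lambda_2 \tau_1 \lambda_2^{-1} = \tau_1^2$, the subgroup $\langle \tau_1, \lambda_2 \rangle$ is a copy of $F_{21}$ sitting inside $\operatorname{Aut}(\bar{\mathcal{F}}_1) \cap \operatorname{Aut}(\bar{\mathcal{F}}_2)$. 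In particular this subgroup is non-abelian of order $21$.

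Next I would obtain the matching upper bound via an orbit–stabilizer argument. Proposition \ref{p1} states on one hand that there are exactly eight Fano planes orthogonal to $\bar{\mathcal{F}}_1$, and on the other hand that for any two of them $\mathcal{S}_1, \mathcal{S}_2$ there is an automorphism of $\bar{\mathcal{F}}_1$ sending $\mathcal{S}_1$ to $\mathcal{S}_2$. Consequently $\operatorname{Aut}(\bar{\mathcal{F}}_1)$ acts transitively on the eight-element set of Fano planes orthogonal to $\bar{\mathcal{F}}_1$, and the stabilizer of $\bar{\mathcal{F}}_2$ under this action is precisely the group $\operatorname{Aut}(\bar{\mathcal{F}}_1) \cap \operatorname{Aut}(\bar{\mathcal{F}}_2)$ of common automorphisms. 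Since $|\operatorname{Aut}(\bar{\mathcal{F}}_1)| = 168$, orbit–stabilizer yields $|\operatorname{Aut}(\bar{\mathcal{F}}_1) \cap \operatorname{Aut}(\bar{\mathcal{F}}_2)| = 168/8 = 21$.

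Combining the two bounds forces $\operatorname{Aut}(\bar{\mathcal{F}}_1) \cap \operatorname{Aut}(\bar{\mathcal{F}}_2) = \langle \tau_1, \lambda_2 \rangle$, and since this group is non-abelian of order $21$ it must be isomorphic to $F_{21}$ (the only non-abelian group of order $21$ up to isomorphism). The only step that requires any genuine work beyond citing Proposition \ref{p1} is the verification of transitivity on the eight orthogonal planes, but this is precisely the content of the second half of that proposition; everything else is either a one-line check on generators or a well-known fact about $\operatorname{Aut}(\mathcal{F})$. I do not foresee a substantial obstacle.
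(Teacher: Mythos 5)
Your proposal is correct and takes essentially the same route as the paper: both reduce to the pair $(\bar{\mathcal{F}}_1,\bar{\mathcal{F}}_2)$ via Corollary~\ref{c1}, both derive $|H|=168/8=21$ from Proposition~\ref{p1} (the paper phrases this as counting left cosets of $H$ in $\operatorname{Aut}(\bar{\mathcal{F}}_1)$, which is exactly your orbit--stabilizer argument), and both use $\tau_1$ and $\lambda_2$ to pin down the isomorphism type. The only cosmetic difference is that the paper merely checks non-commutativity of $\tau_1$ and $\lambda_2$ and invokes uniqueness of the non-abelian group of order $21$, whereas you additionally verify the relation $\lambda_2\tau_1\lambda_2^{-1}=\tau_1^2$ to identify $\langle\tau_1,\lambda_2\rangle\cong F_{21}$ directly.
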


\proof By Corollary \ref{c1}, it suffices to consider the two orthogonal Fano planes $\bar{\mathcal{F}}_1$ and $\bar{\mathcal{F}}_2$ defined in Example \ref{ex1} (with point-set $\mathbb{Z}_7 = \{0,1,2,3,4,5,6\}$), whose triples are the translates (mod $7$) of the base blocks $013$ and $015$,
respectively.

\smallskip
Let $G$ be the full group of automorphisms of $\bar{\mathcal{F}}_1$, and let $H$ be the subgroup of the common automorphisms of $\bar{\mathcal{F}}_1$ and $\bar{\mathcal{F}}_2$. We first show that $H$ is a group with $21$ elements. Since $G$ ($\cong \operatorname{GL}(3,2)$) has order $168$, this is equivalent to proving that $H$ has precisely eight distinct left cosets $\sigma H, \sigma \in G$.

\smallskip
For any choice of $\sigma_1, \sigma_2$ in $G,$ $$\sigma_1H = \sigma_2H \Longleftrightarrow \sigma_2^{-1}\sigma_1 \in H \Longleftrightarrow \sigma_2^{-1}\sigma_1(\bar{\mathcal{F}}_2) = \bar{\mathcal{F}}_2 \Longleftrightarrow \sigma_1(\bar{\mathcal{F}}_2) = \sigma_2(\bar{\mathcal{F}}_2).$$

On the other hand, for any $\sigma$ in $G$, $\sigma(\bar{\mathcal{F}}_2)$ is a Fano plane orthogonal to $\bar{\mathcal{F}}_1$, and, conversely, by Proposition \ref{p1}, any Fano plane orthogonal to $\bar{\mathcal{F}}_1$ can be obtained as $\sigma(\bar{\mathcal{F}}_2)$ for some $\sigma$ in $G$. Now, again by Proposition \ref{p1}, there exist exactly eight Fano planes orthogonal to $\bar{\mathcal{F}}_1$, hence $H$ has precisely eight left cosets $\sigma H, \sigma \in G$, as claimed.

\smallskip
Since the Frobenius group of order $21$ is the only non-abelian group of order $21$, we are only left to prove that $H$ is non-abelian. Let $\lambda_2$ and $\tau_1$ be the permutations of $\mathbb{Z}_7$ defined by $\lambda_2(x) = 2x$ and $\tau_1(x) = x+1 \pmod{7}$. Now $\tau_1$ is in $H$ by definition of $\bar{\mathcal{F}}_1$ and $\bar{\mathcal{F}}_2$, and, as we already noted in Example \ref{lambdatwo}, $H$ also contains $\lambda_2$. Since $\lambda_2\tau_1(0) = 2$ and $\tau_1\lambda_2(0) = 1$, $H$ is a non-abelian group, as claimed.
\endproof

\begin{rem}\label{remlambdatau} \emph{It is well known that the Frobenius group of order $21$ is the group of permutations of $\mathbb{Z}_7 = \{0,1,2,3,4,5,6\}$ generated by $x \mapsto 2x$ and $x \mapsto x+1$ (mod $7$). Equivalently, $F_{21}$ is the group of all the affine transformations of the form $x \mapsto ax+b,\, x\in \mathbb{Z}_{7},$ with $b \in \mathbb{Z}_{7}$ and $a \in \{1,2,4\}.$ This follows immediately from the proof of Theorem \ref{OrthogonalGroup}. Indeed, since $\lambda_2$ and $\tau_1$ have order $3$ and $7$, respectively, the order of the group $H'$ generated by $\lambda_2$ and $\tau_1$ is a multiple of $21$. On the other hand, $H'$ is a subgroup of $H$, which has order $21$, hence $H'=H$.}
\end{rem}

\begin{rem} \emph{It would be interesting to investigate the group of common automorphisms of two orthogonal STS($v$)s also for $v>7.$ The smallest order $v>7$ for which there exist two orthogonal Steiner triple systems is $v=13$ \cite{Mullin}, and, for this order, there exists, up to isomorphism, only one pair of orthogonal STSs (see, for instance, \cite{Gibbons})}.

\smallskip
\emph{Such a pair can be constructed as follows. The common point-set is the cyclic group $\mathbb{Z}_{13},$ and the blocks of the first STS (which is known as the \emph{cyclic} STS($13$)) are the twenty-six triples that are cyclically generated (mod $13$) by the two base blocks $\{1,3,9\}$ and $\{2,5,6\}$ (see, e.g., \cite[Theorem 2.11]{ColbournRosa}). Also, the group of automorphisms of the STS is the Frobenius group of all affine transformations of the form $$x \mapsto \varphi_{a,b}(x) = ax+b, \;\mbox{$x\in \mathbb{Z}_{13},$}$$ with $b \in \mathbb{Z}_{13}$ and $a \in \{1,3,9\}$ (which is, up to isomorphism, the unique non-abelian group of order $39$).}

\smallskip
\emph{The blocks of the second STS are the triples $\{-u,-v,-w\}$ (mod $13$), as $\{u,v,w\}$ ranges over the blocks of the first STS. It is easy to check that the two STSs are orthogonal. Also, since the map $x \mapsto -x$ is an isomorphism between the two STSs, the automorphisms of the second STS are all the transformations of the form $x \mapsto -\varphi_{a,b}(-x) = ax-b,$ with $b \in \mathbb{Z}_{13}$ and $a \in \{1,3,9\},$ hence the group of the common automorphisms of the two STSs simply coincides with the automorphism group of the first STS.}
\end{rem}

\smallskip
\subsection{The STS($15$) \#61 and the KTS($15$) \#61} $ $\\ 

Here we apply Theorem \ref{OrthogonalGroup} to show that the Frobenius group $F_{21}$ is also the automorphism group of a notable Steiner triple system of order $15$ and of the corresponding Kirkman triple system.

\smallskip
We first need some preliminary definitions (see, e.g., \cite{BJL, CRC, ColbournRosa}).

\begin{defn}\label{definitionkts}
\emph{Given an STS($v$) ($\mathcal{V}, \mathcal{B}$), with $v$ multiple of $3,$ a \emph{parallel class} is a subcollection of $v/3$ mutually disjoint blocks in $\mathcal{B}$ that partition the point-set $\mathcal{V}.$ When the entire collection $\mathcal{B}$ of blocks can, in turn, be partitioned into parallel classes, such a partition $\mathcal{R}$ is called a \emph{resolution} of the STS, and the STS is said to be \emph{resolvable}. If this is the case, then ($\mathcal{V}, \mathcal{B}, \mathcal{R}$) is called a \emph{Kirkman triple system} of order $v,$ denoted KTS($v$), and ($\mathcal{V}, \mathcal{B}$) is its \emph{underlying} STS.}

\smallskip
\emph{An isomorphism from a KTS ($\mathcal{V}_1, \mathcal{B}_1, \mathcal{R}_1$) to a KTS ($\mathcal{V}_2, \mathcal{B}_2, \mathcal{R}_2$) is an isomorphism $\sigma$ from the STS ($\mathcal{V}_1, \mathcal{B}_1$) to the STS ($\mathcal{V}_2, \mathcal{B}_2$) that, in addition, preserves the parallel classes: for any parallel class $\mathcal{C}$ in $\mathcal{R}_1,$ the set $\{\sigma(T)\, |\, T \in \mathcal{C}\}$ is a parallel class in $\mathcal{R}_2.$ An \emph{automorphism} is an isomorphism from a KTS to itself.}
\end{defn}

Note that any KTS of order $15$ is a solution of the famous \emph{fifteen schoolgirl problem} \cite[p.~48]{Lady}.

\smallskip
There exist eighty non-isomorphic Steiner triple systems of order $15$ \cite{White}, four of which are resolvable \cite{Cole} (cf. \cite[p.~66]{CRC}, \cite[p.~370]{ColbournRosa}). Three of the four resolvable STS($15$)s underlie two non-isomorphic KTSs, whereas the fourth resolvable STS, usually known as \#61, underlies a unique Kirkman triple system of order $15.$ Moreover, the system \#61 is one of the sixteen STS($15$)s containing a unique Fano plane, and, among these, it is the system with the least number of \emph{Pasch configurations} (see, for instance, \cite[Table 1.29]{CRC}, \cite[Table 5.11]{ColbournRosa}). Finally, the corresponding KTS, which is also denoted by \#61, is related to \emph{Sylvester's problem} of the $15$ schoolgirls: can the $35 \times 13$ (unordered) triples of elements of a $15$-set be grouped into the blocks of $13$ different KTS($15$)s? Denniston's 1974 solution \cite{Denniston} (see also \cite[Example 7.3.2]{Anderson3}, \cite[Example 2.71, p.~66]{CRC}) contains $13$ KTS($15$)s all isomorphic to the KTS \#61.

\smallskip
It is well known that the group of automorphisms of the STS($15$) \#61 is isomorphic to the Frobenius group of order $21.$ This property, however, to the best of our knowledge, is reported in the literature as a mere ``fact'', without an actual proof (the same can be said about the automorphism group of the corresponding KTS). The two main references are \cite{Cole} and \cite{MPR}, where the automorphism group is given as the group generated by two explicit permutations on fifteen symbols, with no further details or references. Here we prove that the group is isomorphic to the group of common automorphisms of two orthogonal Fano planes, and hence is isomorphic, by Theorem \ref{OrthogonalGroup}, to the Frobenius group of order $21.$

\smallskip
\begin{thm}\label{automorphismssts61}
The automorphism group of the STS($15$) \#61 is isomorphic to the Frobenius group of order 21.
\end{thm}
\proof Up to isomorphism, the STS($15$) \#61 is the Steiner triple system $\mathcal{D} = (\mathcal{V}, \mathcal{B})$ with point-set $$\mathcal{V}=\{\infty,0,1,2,3,4,5,6,0',1',2',3',4',5',6'\}$$ and block-set $\mathcal{B}$ consisting of the $35$ triples cyclically generated (mod $7$) by the base blocks $00'\infty ,$ $013,$ $01'6',$ $02'5',$ $03'4'$ (see, for instance, \cite[\S 3, Example 5]{Seven}). Let us set
$$\begin{array}{lcl}
\mathcal{P} & = & \{0,1,2,3,4,5,6\} \\
\mathcal{P}^c & = & \{\infty,0',1',2',3',4',5',6'\}.
\end{array}
$$

\smallskip
Let $\bar{F}_1$ be the Fano plane, contained in $\mathcal{D},$ with point-set $\mathcal{P}$ and block-set consisting of the seven triples cyclically generated (mod $7$) by the base block $013.$ As we mentioned earlier, $\mathcal{D}$ contains a unique Fano plane, hence $\bar{F}_1$ is invariant under any automorphism of $\mathcal{D}.$ It follows, in particular, that $\mathcal{P}$ and $\mathcal{P}^c$ are both invariant as well under any automorphism of $\mathcal{D}.$ Also, let $\bar{F}_2$ be the Fano plane, with point-set $\mathcal{P},$ whose blocks are the seven triples cyclically generated (mod $7$) by the triple $015.$ Then, as we already noted before (see Example \ref{ex1}), $\bar{F}_1$ and $\bar{F}_2$ are orthogonal Fano planes.

\smallskip
By construction, $\mathcal{B}$ does not contain any $3$-subset of $\mathcal{P}^c.$ It follows, by definition of STS, that for any $2$-subset $\{x,y\}$ of $\mathcal{P}^c$ there exists a unique block $axy$ in $\mathcal{B},$ with $a$ in $\mathcal{P}.$ Therefore, for any of the fifty-six $3$-subsets $\{x,y,z\}$ of $\mathcal{P}^c,$ there exists a unique $3$-subset $\{a,b,c\}$ of $\mathcal{P}$ such that $axy,$ $bxz,$ and $cyz$ are blocks in $\mathcal{B}.$ Since $\mathcal{P}$ contains thirty-five $3$-subsets, the map $\{x,y,z\} \mapsto \{a,b,c\}$ is not injective.

\smallskip
One can readily verify, by inspection, that the seven blocks of $\bar{F}_1$ and the seven blocks of $\bar{F}_2$ are the only $3$-subsets $\{a,b,c\}$ of $\mathcal{P}$ that are associated with a unique $3$-subset $\{x,y,z\}$ of $\mathcal{P}^c$ (and that any other $3$-set $\{a,b,c\}$ is associated with exactly two $3$-sets $\{x,y,z\}$). More precisely, each block $\{i,i+1,i+3\}$ in $\bar{F}_1$ is associated only with $\{x,y,z\}=\{(i+2)',(i+4)',(i+5)'\},$ and each block $\{i,i+1,i+5\}$ in $\bar{F}_2$ is associated only with $\{x,y,z\}=\{(i+3)',(i+4)',(i+6)'\}$ (mod $7$). It follows that the family of the fourteen blocks in $\bar{F}_1$ and $\bar{F}_2$ is invariant under any automorphism of $\mathcal{D}.$ Since the same holds for $\bar{F}_1,$ we conclude that $\bar{F}_2$ is invariant under any automorphism of $\mathcal{D}$ as well.

\smallskip
It follows from the properties above that the element $\infty$ is the only point in $\mathcal{P}^c$ that never occurs in a triple $\{x,y,z\}$ associated with a block $\{a,b,c\}$ in either $\bar{F}_1$ or $\bar{F}_2.$ Therefore $\infty$ is invariant under any automorphism of $\mathcal{D}.$ This immediately implies, in turn, that any automorphism of $\mathcal{D}$ is uniquely determined by its restriction to $\mathcal{P}.$ Indeed, for any $n$ in $\mathcal{P},$ and for any automorphism $\sigma$ of $\mathcal{D},$ $\sigma(n)$ is in $\mathcal{P}$ and $\sigma(nn'\infty) = \sigma(n)\sigma(n')\infty,$ hence $\sigma(n')$ necessarily coincides with $\sigma(n)'.$

\smallskip
It follows that the restriction map $\sigma \mapsto \sigma |_{\mathcal{P}}$ is an isomorphism between $\operatorname{Aut}\mathcal{D}$ and a subgroup of the group of common automorphisms of the orthogonal Fano planes $\bar{F}_1$ and $\bar{F}_2.$ Hence $\operatorname{Aut}\mathcal{D}$ has at most $21$ elements. On the other hand, the map $\infty \mapsto \infty,$ $n \mapsto n+1,$ $n' \mapsto (n+1)'$ (mod $7$) is by construction an order-$7$ automorphism of $\mathcal{D},$ and it can be readily verified that the map $\infty \mapsto \infty,$ $n \mapsto 2n,$ $n' \mapsto (2n)'$ (mod $7$) is an order-$3$ automorphism of $\mathcal{D}.$ Therefore $\operatorname{Aut}\mathcal{D}$ has at least $21$ elements, and we may finally conclude that the restriction map $\sigma \mapsto \sigma |_{\mathcal{P}}$ is an isomorphism between $\operatorname{Aut}\mathcal{ D}$ and the group of common automorphisms of $\bar{F}_1$ and $\bar{F}_2,$ which in turn, by Theorem \ref{OrthogonalGroup}, is isomorphic to the Frobenius group of order $21.$

\smallskip
This finally completes the proof of the theorem.
\endproof

As we mentioned earlier, the STS $\mathcal{D} = (\mathcal{V}, \mathcal{B})$ in the proof of Theorem \ref{automorphismssts61} is a resolvable Steiner triple system of order $15,$ usually known as \#61. The thirty-five blocks of the system can be collected in seven parallel classes, which are generated (mod $7$) by the base parallel class $\{00'\infty, 124, 31'5', 54'6', 62'3'\}$ (see, for instance, \cite[\S 3, Example 5]{Seven}). The corresponding Kirkman triple system, which is also denoted by \#61, has the same automorphisms as $\mathcal{D}.$ Indeed, the Steiner triple system $\mathcal{D}$ only contains seven parallel classes \cite{MPR} (cf. \cite[Table 1.29]{CRC}, \cite[p.~370]{ColbournRosa}), which are necessarily permuted by any automorphism of $\mathcal{D}.$ It follows, by definition, that any automorphism of $\mathcal{D}$ is also an automorphism of the KTS \#61 (cf. \cite[System VII]{Cole}). As a consequence of Theorem \ref{automorphismssts61} above, we obtain the following well-known result (see, e.g., \cite{Cole}), for which no complete formal proof seems to have appeared in print in the literature.

\begin{cor}\label{corkts} The automorphism group of the KTS($15$) \#61 is isomorphic to the Frobenius group of order 21.
\end{cor}

\section{Automorphisms of an Oriented Fano Plane}\label{SecOriented}
\smallskip
The goal of this section is to prove that the automorphism group of an oriented Fano plane is $F_{21}$. This result will be proved by showing that the group under consideration is isomorphic to the automorphism group of two orthogonal Fano planes.

\smallskip
Let us recall that, if $\mathcal{F}$ is a Fano plane, then by saying that $\{a,b,c\}$ is a triple in $\mathcal{F}$ we mean that $\{a,b,c\}$ is a block in $\mathcal{F}.$

\begin{lem}\label{l2}
Let $\mathcal{F}$ and $\mathcal{S}$ be two orthogonal Fano planes with point-set $\mathcal{V}$. Then for any triple $\{a,b,c\}$ in $\mathcal{F}$ there exists a unique
triple $\{x,y,z\}$ in $\mathcal{S}$ such that $\{a,b,c\}$ and $\{x,y,z\}$ are disjoint.

Also, for any $v$ in $\mathcal{V}$, there exist a unique triple $\{a,b,c\}$ in $\mathcal{F}$ and a unique
triple $\{x,y,z\}$ in $\mathcal{S}$ such that $\{v\} \cup \{a,b,c\} \cup \{x,y,z\}$ is a partition of $\mathcal{V}$.
\end{lem}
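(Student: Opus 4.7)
The plan is to prove the two claims in sequence: the first by a short counting argument inside $\mathcal{S}$, and the second by combining the first with a single application of the orthogonality axiom.

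For the first claim, I would fix $T = \{a,b,c\} \in \mathcal{F}$ and, for $k \in \{0,1,2,3\}$, let $n_k$ be the number of blocks $T' \in \mathcal{S}$ with $|T \cap T'| = k$. Counting blocks of $\mathcal{S}$, point-block incidences between $T$ and $\mathcal{S}$, and pair-block incidences between pairs in $T$ and $\mathcal{S}$, I obtain
\begin{align*}
n_0 + n_1 + n_2 + n_3 &= 7, \\
n_1 + 2n_2 + 3n_3 &= 9, \\
n_2 + 3n_3 &= 3,
\end{align*}
since each point of a Fano plane lies in $3$ blocks and each pair in exactly one. Orthogonality implies $\mathcal{F} \cap \mathcal{S} = \emptyset$, so $T \notin \mathcal{S}$ and $n_3 = 0$; the system then forces $(n_0, n_1, n_2, n_3) = (1,3,3,0)$, furnishing the unique $T' \in \mathcal{S}$ disjoint from $T$.

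For the second claim, the first claim defines a map $\phi \colon \mathcal{F} \to \mathcal{S}$ sending each triple to its unique disjoint partner. I would set $\psi(T)$ to be the unique point of $\mathcal{V}$ missing from the $6$-set $T \cup \phi(T)$. For any $v \in \mathcal{V}$, any pair $(T_{\mathcal{F}}, T_{\mathcal{S}}) \in \mathcal{F} \times \mathcal{S}$ providing the required partition must satisfy $T_{\mathcal{S}} = \phi(T_{\mathcal{F}})$ (by the first claim) and $\psi(T_{\mathcal{F}}) = v$, and conversely any $T_{\mathcal{F}}$ with $\psi(T_{\mathcal{F}}) = v$ produces such a pair. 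Hence the second claim reduces to $\psi \colon \mathcal{F} \to \mathcal{V}$ being a bijection, and since $|\mathcal{F}| = |\mathcal{V}| = 7$, it suffices to prove injectivity.

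I expect the injectivity of $\psi$ to be the only nontrivial step, and the place where genuine orthogonality (not just disjointness of the two block sets) must be invoked. Suppose $\psi(T_1) = \psi(T_2) = v$ with $T_1 \neq T_2$ in $\mathcal{F}$. Two distinct Fano blocks meet in exactly one point, so $T_1 \cap T_2 = \{p\}$; writing $T_1 = \{p, a_1, a_2\}$ and $T_2 = \{p, b_1, b_2\}$, the union $T_1 \cup T_2$ has five elements, leaving a unique point $q \in (\mathcal{V} \setminus \{v\}) \setminus (T_1 \cup T_2)$, from which one reads off $\phi(T_1) = \{b_1, b_2, q\}$ and $\phi(T_2) = \{a_1, a_2, q\}$. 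Applying Definition \ref{definitionorthogonallsts} to the $\mathcal{F}$-blocks $T_1, T_2$ sharing $p$, and to the $\mathcal{S}$-blocks $\{a_1, a_2, q\}, \{b_1, b_2, q\}$ extending the pairs $\{a_1, a_2\}, \{b_1, b_2\}$, the orthogonality axiom forces the two ``third'' points in $\mathcal{S}$ to be distinct; but both equal $q$, giving the desired contradiction.
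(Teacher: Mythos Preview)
Your proof is correct. For the first claim your counting is essentially the paper's argument written out in equations: the paper simply observes that the three pairs from $\{a,b,c\}$ account for three blocks of $\mathcal{S}$, each single point accounts for one more, and the seventh block must be disjoint.

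For the second claim your route diverges from the paper's. You prove that $\psi$ is injective by applying the orthogonality axiom directly to the configuration $\{p,a_1,a_2\},\{p,b_1,b_2\}\in\mathcal{F}$ and $\{a_1,a_2,q\},\{b_1,b_2,q\}\in\mathcal{S}$; this works cleanly. The paper instead argues by counting the seven disjoint pairs $(T,\phi(T))$: three of them have $v\in T$, three have $v\in\phi(T)$, and these six are distinct (since $T\cap\phi(T)=\emptyset$), leaving exactly one pair with $v$ in neither component. This argument never invokes the orthogonality condition of Definition~\ref{definitionorthogonallsts} beyond disjointness of the block sets, so your expectation that ``genuine orthogonality must be invoked'' here is not borne out---disjointness alone suffices (which is consistent with the paper's earlier remark that for Fano planes the two notions coincide). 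On the other hand, your argument has the virtue of isolating exactly which structural property is doing the work, and it would generalize more readily to settings where orthogonality is strictly stronger than disjointness.
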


\proof
If $\{a,b,c\}$ is a triple in $\mathcal{F}$, then $\mathcal{S}$ contains a triple through $a$ and $b$, a triple through $a$ and $c$, a triple through
$b$ and $c$, three additional triples with exactly one element in $\{a,b,c\}$, and a unique triple disjoint from $\{a,b,c\}$.
Now let $v$ be in $\mathcal{V}$. Each of the three triples in $\mathcal{F}$ containing $v$ is disjoint from a (unique) triple in $\mathcal{S},$ not containing $v$,
and each of the three triples in $S$ containing $v$ is disjoint from a (unique) triple in $F,$ not containing $v$. This leaves a
unique pair of disjoint triples $T_1$, $T_2,$ not containing $v$, with $T_1$ in $\mathcal{F}$ and $T_2$ in $\mathcal{S}$.
\endproof

\begin{prop}\label{p2} Let $\mathcal{F}=(\mathcal{V},\mathcal{B})$ be a Fano plane, and let $\rightarrow$ be an orientation on $\mathcal{F}$. For each $v$ in $\mathcal{V}$, let $T(v)$ be the triple of the three points $x,y,z$ in $\mathcal{V}$ such that $x\rightarrow v$, $y\rightarrow v$, $z\rightarrow v$. If $\mathcal{B}_{\rightarrow}$ is the family of the seven triples $T(v)$, as $v$ ranges over $\mathcal{V}$, then $\mathcal{F}_{\rightarrow}=(\mathcal{V},\mathcal{B}_\rightarrow)$ is a Fano plane orthogonal to $\mathcal{F}$.

Moreover, the map $\varphi(\rightarrow) = \mathcal{F}_{\rightarrow}$ defines a bijection between the family of the orientations on $\mathcal{F}$ and the family of the Fano planes orthogonal to $\mathcal{F}$.
\end{prop}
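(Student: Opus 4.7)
My plan has two stages: first, verify that $\mathcal{F}_\rightarrow$ is a Fano plane orthogonal to $\mathcal{F}$; second, construct the inverse of $\varphi$ explicitly using Lemma \ref{l2}.

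For the first stage, I begin with the observation that $v \mapsto B_v$ (the out-neighborhood of $v$, which is a block of $\mathcal{F}$ by condition (3) of Definition \ref{oriented}) is injective, hence a bijection onto the block-set of $\mathcal{F}$: indeed, if $B_v = B_w$ with $v \neq w$, the partition identity $\{v\} \sqcup B_v \sqcup T(v) = \mathcal{V} = \{w\} \sqcup B_w \sqcup T(w)$ forces $v \in T(w)$ and $w \in T(v)$, i.e.\ $v \to w$ and $w \to v$, contradicting antisymmetry. For the Steiner-triple-system property of $\mathcal{F}_\rightarrow$, a count of $7 \cdot \binom{3}{2} = 21 = \binom{7}{2}$ pair-incidences reduces the check to ruling out $\{a, b\} \subseteq T(v_1) \cap T(v_2)$ for $v_1 \neq v_2$: here $B_a$ and $B_b$ both contain $\{v_1, v_2\}$, so $B_a = B_b$ by uniqueness of the Fano block through a pair, whence $a = b$ by the injectivity just established---contradiction. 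For orthogonality (equivalent to disjointness since $v = 7$), suppose $T(v) = \{x, y, z\}$ is a block of $\mathcal{F}$; then the out-neighborhood $B_v = \mathcal{V} \setminus (\{v\} \cup T(v))$ is a 3-subset of the 4-point complement of the block $T(v)$, which contains no block (any two Fano blocks share a point). Yet $B_v$ is required to be a block---contradiction.

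For the second stage, given $\mathcal{S}$ orthogonal to $\mathcal{F}$, Lemma \ref{l2} supplies, for each $v$, a unique pair $(B_v, T(v))$ with $B_v$ a block of $\mathcal{F}$, $T(v)$ a block of $\mathcal{S}$, and $\{v\} \sqcup B_v \sqcup T(v) = \mathcal{V}$. I define the candidate inverse orientation by $x \to y$ iff $y \in B_x$, for which condition (3) of Definition \ref{oriented} is immediate. For antisymmetry, if both $y \in B_x$ and $x \in B_y$, writing $B_x = \{y, p, q\}$ and $B_y = \{x, r, s\}$, the Fano intersection property in $\mathcal{F}$ forces $|\{p, q\} \cap \{r, s\}| = 1$, say $p = r$; but then $T(x) = \mathcal{V} \setminus \{x, y, p, q\}$ and $T(y) = \mathcal{V} \setminus \{x, y, p, s\}$ are blocks of $\mathcal{S}$ intersecting in the 2-point set $\mathcal{V} \setminus \{x, y, p, q, s\}$, violating the Fano structure of $\mathcal{S}$. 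For cyclicity on a block $\{a, b, c\}$ of $\mathcal{F}$, any acyclic orientation would produce a source vertex whose out-neighborhood contains the other two, forcing that out-neighborhood to equal $\{a, b, c\}$ (the unique block through that pair), contradicting that a vertex is never in its own out-neighborhood. Finally $\varphi(\rightarrow) = \mathcal{S}$ follows from the duality $v \in B_x \Leftrightarrow x \in T(v)$ (provable by an analogous Fano intersection argument), which identifies the in-neighborhood of $v$ under $\rightarrow$ with $T(v)$; since $v \mapsto T(v)$ is a bijection onto the block-set of $\mathcal{S}$, we conclude $\mathcal{B}_\rightarrow$ equals that block-set. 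The main obstacle is the antisymmetry step, which requires leveraging the Fano intersection structure simultaneously in $\mathcal{F}$ and in $\mathcal{S}$ via the partition identity of Lemma \ref{l2}.
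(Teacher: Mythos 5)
Your argument is correct in its essentials and takes a genuinely more structural route than the paper's. For the first assertion, the paper fixes an explicit labeling $abc, ade, afg, bdf, beg, cdg, cef$ of the blocks and works out the orientation by a case analysis, directly exhibiting the seven $T(v)$'s and observing that they form an orthogonal Fano plane; you instead prove injectivity of the out-neighborhood map $v \mapsto B_v$ (a nice one-liner from the partition identity plus antisymmetry), establish the Steiner property by a double-count of the $21$ pair-incidences, and get disjointness from the pairwise intersection of Fano blocks. This is tidier and avoids the (notationally heavy) case analysis. For the bijection, the paper first proves injectivity of $\varphi$ from the uniqueness in Lemma \ref{l2}, and then establishes surjectivity by a local consistency check that the pointwise definition of $\rightarrow$ agrees on every triple; you instead define a candidate inverse $\psi$ via Lemma \ref{l2} and verify the orientation axioms head-on, with antisymmetry coming from playing the Fano-intersection bound in $\mathcal{F}$ against the one in $\mathcal{S}$ via the complementary partitions --- an attractive argument.

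Two points should be spelled out. First, what you verify is that $\varphi\bigl(\psi(\mathcal{S})\bigr)=\mathcal{S}$, which gives surjectivity of $\varphi$ but not yet injectivity. To finish, note that for any orientation $\rightarrow$ the partition $\{v\}\cup B^{\rightarrow}_v\cup T^{\rightarrow}(v)=\mathcal{V}$ has $B^{\rightarrow}_v$ a block of $\mathcal{F}$ (by Definition \ref{oriented}) and $T^{\rightarrow}(v)$ a block of $\mathcal{F}_{\rightarrow}$; by the uniqueness statement of Lemma \ref{l2} applied to $(\mathcal{F},\mathcal{F}_{\rightarrow})$, this is the same partition that $\psi$ uses, so $\psi(\varphi(\rightarrow))=\,\rightarrow$. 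This is exactly the paper's injectivity argument and is needed to close the loop. Second, the cyclicity step as phrased (``any acyclic orientation would produce a source vertex'') already presupposes that the restriction of $\rightarrow$ to a block of $\mathcal{F}$ is a tournament, which is the totality statement you defer to the final ``duality.'' This can be repaired locally without invoking the duality: for $v$ in the block $\{a,b,c\}$, $B_v$ is a block of $\mathcal{F}$ distinct from $\{a,b,c\}$ (since $v\notin B_v$) and so meets $\{a,b,c\}$ in exactly one point, giving out-degree exactly one at each vertex inside the triple; together with antisymmetry this forces all three pairs to be oriented and forces a $3$-cycle, establishing cyclicity and totality at once.
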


\proof
Suppose, for instance, that $\mathcal{F}$ contains the triples $abc, ade, afg, bdf, beg,$ $cdg,$ $cef$. Let $\rightarrow$ be an orientation on $\mathcal{F}$.
We may assume, up to permutation, that $a\rightarrow b \rightarrow c \rightarrow a$ and $a \rightarrow d \rightarrow e \rightarrow a$. Since $bdf$ is a triple in $\mathcal{F}$, this implies, by Definition \ref{oriented}, that the orientation on the triple $afg$ is $a\rightarrow f\rightarrow g\rightarrow a$. Since $c\rightarrow a$ and $adf$ is not a triple in $\mathcal{F}$, the conditions $c \rightarrow d$ and $c \rightarrow f$ cannot both hold, hence either $d \rightarrow c$ or $f \rightarrow c$. If the two latter conditions were both satisfied, then the orientation on the two triples $cdg$ and $cef$ would be $d \rightarrow c \rightarrow g \rightarrow d$ and $f \rightarrow c \rightarrow e \rightarrow f$, against the hypotheses that $c \rightarrow a$ and $age$ is not a triple in $\mathcal{F}$. Therefore one and only one of the two conditions $d \rightarrow c$ and $f \rightarrow c$ occurs. We assume that $d \rightarrow c$ and $c \rightarrow f$, the other case being examined similarly.

\smallskip
The orientation on the two triples $cdg$ and $cef$ is now $d\rightarrow c \rightarrow g \rightarrow d$ and $c \rightarrow f \rightarrow e \rightarrow c$. Since $f \rightarrow e$ and $f \rightarrow g$ (resp., $g \rightarrow a$ and $g \rightarrow d$), and since $beg$ (resp., $ade$) is a triple in $\mathcal{F}$, forcing $f \rightarrow b$ (resp., $g \rightarrow e$), the orientation on the triple $bdf$ (resp., $beg$) is $f\rightarrow b\rightarrow d \rightarrow f$ (resp., $g \rightarrow e \rightarrow b \rightarrow g$). This finally specifies the orientation on all seven triples in $\mathcal{F}$ (and hence on any pair of distinct elements of $\mathcal{V}$). Also, $\mathcal{B}_{\rightarrow}$ is, by definition, the family $\{T(a), T(b),\dots, T(g)\}$ $=$ $\{ceg, aef, bde, abg,$ $dfg, acd, bcf\}$, hence $\mathcal{F}_{\rightarrow}$ is a Fano plane orthogonal to $\mathcal{F}$.

\smallskip
We now need to prove that the map $\varphi(\rightarrow) = \mathcal{F}_{\rightarrow}$ is a bijection. Let $\mathcal{S}$ be a Fano plane orthogonal to $\mathcal{F}.$ We need to show that there exists a unique orientation $\rightarrow$ on $\mathcal{F}$ such that $\mathcal{F}_{\rightarrow} = \mathcal{S}.$

\smallskip
Let $v$ be a given point in $\mathcal{V}$. Since $\mathcal{F}$ and $\mathcal{S}$ are orthogonal Fano planes, by Lemma \ref{l2} there exist a unique triple $\alpha\beta\gamma$ in $\mathcal{F}$ and a unique triple $xyz$ in $\mathcal{S}$ such that $\{v\} \cup \{\alpha,\beta,\gamma\} \cup \{x,y,z\}$ is a partition of $\mathcal{V}$. Therefore, by Definition \ref{oriented}, if $\mathcal{S}=\mathcal{F}_{\rightarrow}$ for some orientation $\rightarrow$ on $\mathcal{F},$ then the triple $T(v)$ coincides necessarily with $xyz$, hence $x \rightarrow v$, $y \rightarrow v$, $z \rightarrow v$, and $v\rightarrow \alpha$, $v\rightarrow \beta$, $v\rightarrow \gamma$. Since $v$ was an arbitrary point in $\mathcal{V}$, it follows that $\mathcal{S}$ determines the orientation $\rightarrow$ on $\mathcal{F}$ uniquely (that is, the map $\varphi $ is injective).

\smallskip
We are now left to prove that the above definition of the orientation $\rightarrow$ induced by $\mathcal{S}$ is well posed (if this is the case, then $\mathcal{S}=\mathcal{F}_{\rightarrow}$ by construction, that is, $\varphi $ is surjective). Equivalently, we need to show that, as $v$ ranges over $\mathcal{V},$ the partitions $\{v\} \cup \{\alpha,\beta,\gamma\} \cup \{x,y,z\}$ of $\mathcal{V}$ determine an orientation on $\mathcal{F}$ unambiguously. Given a triple $ijk$ in $\mathcal{F}$, it suffices to prove that the choices $v=i$, $v=j$, and $v=k$ produce the same orientation on $ijk.$



\smallskip
Since any triple in $\mathcal{F}$ different from $ijk$ intersects $ijk$ in precisely one point, we may assume that, for $v=i,$ there exist a triple $kmn$ in $\mathcal{F}$ and a triple $juw$ in $\mathcal{S}$ that partition $\mathcal{V}$ together with $i$, thereby inducing the orientation $j \rightarrow i \rightarrow k \rightarrow j$. Also, for $v=j$, there exist a triple $T_\mathcal{F}$ in $\mathcal{F}$ and a triple $T_\mathcal{S}$ in $\mathcal{S}$ that partition $\mathcal{V}$ together with $j$. Now $T_\mathcal{S}$ and $juw,$ being distinct triples in $\mathcal{S}$, have precisely one element in common (necessarily different from $j$), hence we may assume that $T_\mathcal{S}$ contains $u$ but not $w,$ that is, $w \in T_\mathcal{F}$ and $u \not \in T_\mathcal{F}.$ This implies, in turn, that $k \not \in T_\mathcal{F},$  since $kuw$ is the unique triple in $\mathcal{F}$ containing $k$ and $w.$ Hence $T_\mathcal{F} \cap \{i,j,k\} = \{i\}$ and $k \in T_\mathcal{S}$, say $T_\mathcal{F} = imw$ and $T_\mathcal{S}= kun,$ thus inducing again the orientation $j \rightarrow i \rightarrow k \rightarrow j$. Finally, in addition to $kuw$ and $imw,$ $\mathcal{F}$ must also contain $jnw,$ whereas $\mathcal{S}$, in addition to $juw$ and $kun,$ must also contain $ium,$ which is disjoint from $jnw$ and $\{k\}$. Therefore the choice $v=k$ produces again the orientation $j \rightarrow i \rightarrow k \rightarrow j$.
It follows that $\rightarrow$ is a well-defined orientation on $\mathcal{F}$, as claimed.
\endproof

Propositions \ref{p1} and \ref{p2} now immediately imply the following results.

\begin{cor}\label{eightorientations}
A given Fano plane admits exactly eight distinct orientations.
\end{cor}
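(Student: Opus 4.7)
The plan is to chain together the two propositions already proved in this section. Proposition \ref{p2} establishes that the assignment $\varphi(\rightarrow) = \mathcal{F}_{\rightarrow}$ is a bijection between the set of orientations on a given Fano plane $\mathcal{F}$ and the set of Fano planes orthogonal to $\mathcal{F}$. Proposition \ref{p1}, in turn, tells us that there are exactly eight Fano planes orthogonal to any fixed Fano plane. Since a bijection preserves cardinality, we immediately get that the number of orientations on $\mathcal{F}$ equals eight.

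Concretely, the single step I would carry out is to invoke $\varphi$ from Proposition \ref{p2} and note that the domain of $\varphi$ (orientations on $\mathcal{F}$) must therefore have the same cardinality as its codomain (Fano planes orthogonal to $\mathcal{F}$), which by Proposition \ref{p1} equals $8$. No case analysis, enumeration, or further combinatorial argument is required.

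There is no real obstacle here, since all the work has been done in the preceding two propositions; the corollary is genuinely a one-line consequence. The only thing to be careful about is simply citing the right direction of Proposition \ref{p2}, namely that $\varphi$ is a \emph{bijection} (both injective and surjective), so that the count transfers from one side to the other without loss. One could also phrase the argument more explicitly by saying that each orthogonal Fano plane $\mathcal{S}$ arises from a unique orientation $\rightarrow$ via $\mathcal{S} = \mathcal{F}_{\rightarrow}$, and conversely each orientation produces a distinct orthogonal Fano plane, so the eight orthogonal planes enumerated in Proposition \ref{p1} correspond bijectively to eight orientations on $\mathcal{F}$.
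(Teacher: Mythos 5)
Your proof is correct and matches the paper's approach exactly: the paper states that Propositions \ref{p1} and \ref{p2} immediately imply this corollary, which is precisely the chaining of the bijection $\varphi$ with the count of eight orthogonal Fano planes that you carry out.
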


\begin{cor}\label{orientationchoice}
Let $\rightarrow_1$ and $\rightarrow_2$ be two orientations on a Fano plane $\mathcal{F}$. Then there exists an automorphism $\sigma$ of $\mathcal{F}$ such that
$\sigma(\rightarrow_1) = \,\rightarrow_2$. As a consequence, the group of automorphisms of an oriented Fano plane does not depend on
the choice of the orientation.
\end{cor}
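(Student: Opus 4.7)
The plan is to reduce the statement to the two previously established results: the bijection $\varphi$ of Proposition \ref{p2} between orientations on $\mathcal{F}$ and Fano planes orthogonal to $\mathcal{F}$, together with the transitivity property in Proposition \ref{p1} (for any two Fano planes $\mathcal{S}_1,\mathcal{S}_2$ orthogonal to $\mathcal{F}$, some automorphism of $\mathcal{F}$ carries $\mathcal{S}_1$ to $\mathcal{S}_2$). Given the two orientations $\rightarrow_1$ and $\rightarrow_2$, I would first form the orthogonal Fano planes $\mathcal{S}_1=\varphi(\rightarrow_1)$ and $\mathcal{S}_2=\varphi(\rightarrow_2)$, and then apply Proposition \ref{p1} to obtain an automorphism $\sigma$ of $\mathcal{F}$ with $\sigma(\mathcal{S}_1)=\mathcal{S}_2$. (If $\mathcal{S}_1=\mathcal{S}_2$, take $\sigma$ to be the identity.)

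The crucial point is the equivariance of $\varphi$ under $\operatorname{Aut}(\mathcal{F})$, namely $\varphi(\sigma(\rightarrow))=\sigma(\varphi(\rightarrow))$ for every $\sigma\in\operatorname{Aut}(\mathcal{F})$ and every orientation $\rightarrow$ on $\mathcal{F}$. This I would check by unwinding the construction of $\varphi$: for any $v\in\mathcal{V}$, the block $T(v)\in\mathcal{B}_{\rightarrow}$ consists of the three points $x$ with $x\rightarrow v$. Applying $\sigma$ and using Definition \ref{automoriented} (i.e.\ $\sigma(x)\rightarrow_{\sigma}\sigma(v)$ iff $x\rightarrow v$), one sees that $\sigma(T(v))$ is precisely the block of $\varphi(\sigma(\rightarrow))$ determined by $\sigma(v)$. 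Hence $\sigma(\mathcal{F}_{\rightarrow})=\mathcal{F}_{\sigma(\rightarrow)}$, as claimed. Applying this identity to $\rightarrow_1$ gives $\varphi(\sigma(\rightarrow_1))=\sigma(\mathcal{S}_1)=\mathcal{S}_2=\varphi(\rightarrow_2)$, and the injectivity of $\varphi$ (Proposition \ref{p2}) yields $\sigma(\rightarrow_1)=\rightarrow_2$.

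For the consequence, let $\sigma\in\operatorname{Aut}(\mathcal{F})$ be as above and consider the conjugation map $\Phi:\tau\mapsto \sigma\tau\sigma^{-1}$ on $\operatorname{Aut}(\mathcal{F})$. If $\tau$ preserves $\rightarrow_1$, then $(\sigma\tau\sigma^{-1})(\rightarrow_2)=\sigma\tau(\rightarrow_1)=\sigma(\rightarrow_1)=\rightarrow_2$, so $\Phi$ sends $\operatorname{Aut}(\mathcal{F},\rightarrow_1)$ into $\operatorname{Aut}(\mathcal{F},\rightarrow_2)$. The same argument with the roles of $\rightarrow_1$ and $\rightarrow_2$ swapped (using $\sigma^{-1}$) shows that $\Phi$ is a group isomorphism between the two stabilizers. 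The main obstacle is really nothing more than verifying the equivariance identity in the previous paragraph carefully; once that is in hand, both parts of the corollary follow immediately from Propositions \ref{p1} and \ref{p2}.
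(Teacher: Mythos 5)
Your proposal is correct and follows essentially the same route as the paper: both invoke the bijection $\varphi$ of Proposition \ref{p2}, apply the transitivity statement of Proposition \ref{p1} to the images $\varphi(\rightarrow_1),\varphi(\rightarrow_2)$, use the equivariance identity $\sigma(\mathcal{F}_{\rightarrow})=\mathcal{F}_{\sigma(\rightarrow)}$ together with injectivity of $\varphi$ to pull back to the orientations, and then conclude by conjugation. The only difference is that you spell out the verification of the equivariance identity (which the paper dismisses as ``immediate by Proposition \ref{p2}''), and you conjugate by $\sigma$ where the paper conjugates by $\sigma^{-1}$; these are cosmetic.
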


\proof
Let $ \rightarrow _1$ and $ \rightarrow _2$ be two orientations on $\mathcal{F}$, corresponding to the two pairs of orthogonal Fano planes $(\mathcal{F},\mathcal{F}_{\rightarrow_1})$ and $(\mathcal{F},\mathcal{F}_{\rightarrow_2})$, respectively, by Proposition \ref{p2}.
Now, by Proposition \ref{p1}, there exists an automorphism $\sigma$ of $\mathcal{F}$ that is an isomorphism from $\mathcal{F}_{\rightarrow_1}$ onto $\mathcal{F}_{\rightarrow_2}$. In particular, $\sigma(\mathcal{F}_{\rightarrow_1})=\mathcal{F}_{\rightarrow_2}$.
On the other hand, it is immediate by Proposition \ref{p2} that $\sigma(\mathcal{F}_{\rightarrow_1})=\mathcal{F}_{\sigma(\rightarrow_1)}$, hence $\sigma(\rightarrow_1) = \,\rightarrow_2$.

\smallskip
Therefore, if $\mu$ is an automorphism of $\mathcal{F},$ then $\mu$ is an automorphism of $(\mathcal{F}, \rightarrow_2)$ if and only if $\sigma^{-1}\mu\sigma$ is an automorphism of $(\mathcal{F}, \rightarrow _1)$. It follows that the group of automorphisms of $(\mathcal{F}, \rightarrow_2)$ is (isomorphic to) the group of automorphisms of $(\mathcal{F}, \rightarrow_1)$.
\endproof

\begin{cor}\label{orientedorthogonal} Let $\mathcal{F}$ be a Fano plane, let $\rightarrow$ be an orientation on $\mathcal{F}$, and let $\mathcal{F}_{\rightarrow}$ be the associated orthogonal Fano plane. If $\sigma$ is an automorphism of $\mathcal{F}$, then $\sigma$ is an automorphism of $(\mathcal{F}, \rightarrow)$ if and only if $\sigma$ is an automorphism of $\mathcal{F}_{\rightarrow}$. As a consequence, the automorphism group of the oriented Fano plane $(\mathcal{F}, \rightarrow)$ is isomorphic to the group of common automorphisms of the two orthogonal Fano planes $\mathcal{F}$ and $\mathcal{F}_{\rightarrow}$.
\end{cor}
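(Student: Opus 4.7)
The plan is to leverage the bijection $\varphi$ from Proposition \ref{p2}. The heart of the argument is the naturality identity
$$\mathcal{F}_{\sigma(\rightarrow)} \,=\, \sigma(\mathcal{F}_{\rightarrow})$$
for every $\sigma \in \operatorname{Aut}(\mathcal{F})$ and every orientation $\rightarrow$ on $\mathcal{F}$. Once this is in hand, the first assertion of the corollary follows by a short chain of equivalences, and the ``as a consequence'' part is then just an unwinding of the definition of a common automorphism.

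To establish the naturality identity, I would fix $\sigma \in \operatorname{Aut}(\mathcal{F})$ and an arbitrary $w \in \mathcal{V}$, and set $v = \sigma^{-1}(w)$. By Definition \ref{automoriented}, the condition $a \rightarrow_{\sigma} w$ is equivalent to $\sigma^{-1}(a) \rightarrow \sigma^{-1}(w) = v$. Hence the triple $T_{\sigma(\rightarrow)}(w)$ of the three points $a$ with $a \rightarrow_{\sigma} w$ consists precisely of the $\sigma$-images of the three points $\sigma^{-1}(a)$ satisfying $\sigma^{-1}(a) \rightarrow v$; in other words, $T_{\sigma(\rightarrow)}(w) = \sigma(T(v))$. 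Letting $w$ (equivalently $v$) range over $\mathcal{V}$ yields $\mathcal{B}_{\sigma(\rightarrow)} = \sigma(\mathcal{B}_{\rightarrow})$ and thus the claimed equality of Fano planes.

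With the identity available, the equivalence is immediate: $\sigma$ is an automorphism of $(\mathcal{F}, \rightarrow)$ iff $\sigma(\rightarrow) = \,\rightarrow$, iff $\varphi(\sigma(\rightarrow)) = \varphi(\rightarrow)$ by injectivity of $\varphi$ from Proposition \ref{p2}, iff $\sigma(\mathcal{F}_{\rightarrow}) = \mathcal{F}_{\rightarrow}$ by the naturality identity, iff $\sigma$ is an automorphism of $\mathcal{F}_{\rightarrow}$. For the final assertion, since by Definition \ref{automoriented} the automorphism group of $(\mathcal{F}, \rightarrow)$ consists exactly of those $\sigma \in \operatorname{Aut}(\mathcal{F})$ that preserve $\mathcal{F}_{\rightarrow}$, it coincides as a subgroup of $\operatorname{Sym}(\mathcal{V})$ with the group of common automorphisms of the orthogonal pair $\mathcal{F}$ and $\mathcal{F}_{\rightarrow}$.

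The only real pitfall is the bookkeeping with $\sigma$ versus $\sigma^{-1}$ in the definition of the induced orientation $\rightarrow_{\sigma}$, where an off-by-one mismatch in direction would break the naturality identity; apart from that, the argument is essentially formal, relying entirely on results already established in Propositions \ref{p1} and \ref{p2}.
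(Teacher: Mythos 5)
Your proof is correct and follows essentially the same route as the paper's: the key step in both is the naturality identity $\mathcal{F}_{\sigma(\rightarrow)} = \sigma(\mathcal{F}_{\rightarrow})$, which the paper simply asserts and you verify directly from Definition~\ref{automoriented} and the definition of $T(v)$. Your explicit appeal to the injectivity of $\varphi$ when passing from $\mathcal{F}_{\sigma(\rightarrow)} = \mathcal{F}_{\rightarrow}$ back to $\sigma(\rightarrow) = \,\rightarrow$ makes a step that the paper leaves implicit, but this is a gain in clarity, not a different argument.
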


\proof Let $\sigma$ be an automorphism of $\mathcal{F}$. Since $\mathcal{F}_{\rightarrow}$ is the orthogonal Fano plane associated with $\rightarrow$, $\sigma(\mathcal{F}_{\rightarrow})$ is the orthogonal Fano plane associated with $\sigma(\rightarrow)$. Hence $\sigma(\rightarrow) = \,\rightarrow$ if and only if $\sigma(\mathcal{F}_{\rightarrow}) = \mathcal{F}_{\rightarrow}$. Equivalently, by Definition \ref{automoriented}, $\sigma$ is an automorphism of $(\mathcal{F}, \rightarrow)$ if and only if $\sigma$ is an automorphism of $\mathcal{F}_{\rightarrow}$.
\endproof

In view of Corollaries \ref{orientationchoice} and \ref{orientedorthogonal}, and of Theorem \ref{OrthogonalGroup}, we can now state the main result of this section.

\begin{thm}\label{OrientedGroup}
The group of automorphisms of an oriented Fano plane is isomorphic to the Frobenius group of order $21$.
\end{thm}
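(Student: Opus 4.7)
The plan is to combine the three preparatory results that immediately precede the theorem into a short chain of implications, with no new construction required. Let $(\mathcal{F},\rightarrow)$ be an oriented Fano plane with point-set $\mathcal{V}$. By Proposition \ref{p2}, the orientation $\rightarrow$ canonically determines a Fano plane $\mathcal{F}_{\rightarrow}$ on $\mathcal{V}$ that is orthogonal to $\mathcal{F}$. The first step is to invoke Corollary \ref{orientedorthogonal}, which identifies $\operatorname{Aut}(\mathcal{F},\rightarrow)$ with the group of common automorphisms of the two orthogonal Fano planes $\mathcal{F}$ and $\mathcal{F}_{\rightarrow}$.

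The second step is then to apply Theorem \ref{OrthogonalGroup}, which asserts that the group of common automorphisms of any pair of orthogonal Fano planes is isomorphic to $F_{21}$. Combining these two steps immediately yields $\operatorname{Aut}(\mathcal{F},\rightarrow)\cong F_{21}$, as claimed.

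Finally, to see that the resulting isomorphism class is genuinely independent of all the choices made along the way, one observes that any two Fano planes are isomorphic, so we may fix $\mathcal{F}$, and then Corollary \ref{orientationchoice} tells us that for any two orientations $\rightarrow_1$, $\rightarrow_2$ on $\mathcal{F}$ there is some $\sigma\in\operatorname{Aut}(\mathcal{F})$ with $\sigma(\rightarrow_1)=\,\rightarrow_2$; conjugation by $\sigma$ then furnishes an isomorphism $\operatorname{Aut}(\mathcal{F},\rightarrow_1)\cong\operatorname{Aut}(\mathcal{F},\rightarrow_2)$.

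There is no real obstacle to overcome at this stage: the genuinely substantive content has already been absorbed into Proposition \ref{p2} (the bijection between orientations on $\mathcal{F}$ and Fano planes orthogonal to $\mathcal{F}$) and into Theorem \ref{OrthogonalGroup} (the order and non-abelian nature of the group of common automorphisms). The present theorem is merely the synthesis of these results, and the proof should therefore be written in just a few lines, citing Corollary \ref{orientedorthogonal}, Corollary \ref{orientationchoice}, and Theorem \ref{OrthogonalGroup} in sequence.
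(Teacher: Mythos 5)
Your proposal is correct and follows essentially the same route as the paper: the paper itself presents Theorem~\ref{OrientedGroup} as an immediate consequence of Corollaries~\ref{orientationchoice} and~\ref{orientedorthogonal} together with Theorem~\ref{OrthogonalGroup}, which is precisely the chain you spell out.
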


\begin{rem}\label{cyclicorientation}
\emph{(a) Let $(\mathcal{F}, \rightarrow)$ be an oriented Fano plane. In particular, by Definition \ref{oriented}, $\rightarrow$ induces a cyclic orientation on each of the seven triples in $\mathcal{F}$. Moreover, the first part of the proof of Proposition \ref{p2} shows that $\rightarrow$ induces a cyclic orientation also on each of the seven triples $T(a), T(b),\dots, T(g)$ in $\mathcal{F}_{\rightarrow}.$ In fact, the orthogonal Fano plane $\mathcal{F}_{\rightarrow}$ can be characterized as the unique Fano plane $\mathcal{S}$, orthogonal to $\mathcal{F}$, with the property that the orientation $\rightarrow$ on $\mathcal{F}$ induces a cyclic orientation on each of the seven triples in $\mathcal{S},$ as well.}

\smallskip
\emph{As a consequence, besides the definition of $\mathcal{F}_{\rightarrow}$ in Proposition \ref{p2}, the triples in $\mathcal{F}_{\rightarrow}$ can also be determined as follows. Given two distinct points $a, b$ in the point-set $\mathcal{V}$, if $abc$ is the triple through $a$ and $b$ in $\mathcal{F}$, say $a\rightarrow b \rightarrow c \rightarrow a,$ then the triple through $a$ and $b$ in $\mathcal{F}_{\rightarrow}$ is precisely the triple $abx$, where $x$ is the unique point in $\mathcal{V}$, different from $c$, such that $a\rightarrow b \rightarrow x \rightarrow a$. Indeed, if $x$ were not unique, then $a\rightarrow b \rightarrow y \rightarrow a$ for an additional point $y,$ and thus, by Definition \ref{oriented} and Proposition \ref{p2}, $cxy$ ($= T(a)$) would be a triple in both $\mathcal{F}$ and $\mathcal{F}_{\rightarrow}$, against the fact that the two Fano planes are orthogonal.}

\smallskip
\emph{For instance, if $(\bar{\mathcal{F}}_1, \rightarrow)$ is the oriented Fano plane in Figure \ref{orientedfano2} (see Example \ref{exoriented}), then it is clear from the picture that the cyclic orientation $0 \rightarrow 1 \rightarrow x \rightarrow 0$ occurs if and only if either $x=3$ or $x=5,$ which corresponds to the fact that $013$ is a triple in
$\bar{\mathcal{F}}_1$ and $015$ is a triple in the orthogonal Fano plane $(\bar{\mathcal{F}}_1)_{\rightarrow}.$ Also, one easily finds that $(\bar{\mathcal{F}}_1)_{\rightarrow}$ is precisely the Fano plane $\bar{\mathcal{F}}_2$ introduced in Example \ref{ex1}. With a different viewpoint, the property that the orientation $\rightarrow$ induces a cyclic orientation on the triples of both the orthogonal Fano planes $\bar{\mathcal{F}}_1$ and $\bar{\mathcal{F}}_2$ is an immediate consequence of the fact that, for any two distinct points $x,y$ in $\mathbb{Z}_7,$ $x \rightarrow y$ if and only if $y-x \in \{1,2,4\}$ (mod $7$), which, in turn, yields the cyclic orientation $x\rightarrow x+1\rightarrow x+3\rightarrow x$ and $x\rightarrow x+1\rightarrow x+5\rightarrow x$ on all the triples in $\bar{\mathcal{F}}_1$ and $\bar{\mathcal{F}}_2,$ respectively. A nice graphical visualization of this property will be proposed in the subsequent Remark \ref{cylicorientationtorus} in Section \ref{SecEmbeddings}.}

\smallskip
\emph{(b) The above characterization of the orthogonal Fano plane $\mathcal{F}_{\rightarrow}$ in remark (a), together with Definition \ref{oriented} and Proposition \ref{p2}, shows that $\mathcal{F}_{\rightarrow}$ is itself an oriented Fano plane with respect to the inverse orientation $\twoheadrightarrow,$ where $x \twoheadrightarrow y$ if and only if $y \rightarrow x.$ Also, by iterating the construction of the orthogonal Fano plane associated with the orientation, it is immediate that $(\mathcal{F}_{\rightarrow})_{\twoheadrightarrow} = \mathcal{F},$ by Proposition \ref{p2}.}

\smallskip
\emph{Moreover, we may give a simple alternative proof of Theorem \ref{OrientedGroup}. Indeed, by the characterization of $(\bar{\mathcal{F}}_1)_{\rightarrow}$ in remark (a), the block-set of $\bar{\mathcal{F}}_1$ and the block-set of $(\bar{\mathcal{F}}_1)_{\rightarrow}$ are both invariant under any automorphism of $(\bar{\mathcal{F}}_1, \rightarrow).$ Therefore the automorphism group $H$ of $(\bar{\mathcal{F}}_1, \rightarrow)$ is a subgroup of the group $G$ of common automorphisms of the orthogonal Fano planes $\bar{\mathcal{F}}_1$ and $(\bar{\mathcal{F}}_1)_{\rightarrow},$ which is isomorphic to $F_{21}$ by Theorem \ref{OrthogonalGroup}. On the other hand, $H$ contains the automorphisms $\lambda_2$ and $\tau_1$ (see Remark \ref{120degree}), which have order $3$ and $7,$ respectively, hence the order of $H$ is a multiple of $21.$ It finally follows that $H$ $(=G)$ is isomorphic to $F_{21}.$}

\smallskip
\emph{(c) Let $\mathcal{F}$ be a Fano plane with the same seven triples as in the proof of Proposition \ref{p2}. The construction in the proof shows that the orientation on $\mathcal{F}$ is uniquely determined by the orientation on the triples $abc, ade,$ and $cdg$. Since there are two possible cyclic orientations for each of these three triples, this gives an alternative proof that there exist exactly eight distinct orientations on $\mathcal{F}$ (Corollary \ref{eightorientations}).}

\smallskip
\emph{(d) We finally outline an alternative construction to characterize the orientations on a given Fano plane $\mathcal{F}=(\mathcal{V},\mathcal{B}).$ Let us say that a sequence $$\mathcal{C}=(x_1,\, x_2,\, x_3,\, x_4,\, x_5,\, x_6,\, x_7)$$ in $\mathcal{V}$ is a \emph{Fano circuit} in $\mathcal{F}$ if $\{x_1,\, x_2,\, x_3,\, x_4,\, x_5,\, x_6,\, x_7\} = \mathcal{V}$ and for each $T$ in $\mathcal{B}$ there exists a (necessarily unique) $2$-set $\{x_i,\, x_{i+1}\} \pmod 7$ such that $\{x_i,\, x_{i+1}\} \subseteq T.$ In other words, $\mathcal{C}$ is a Hamiltonian circuit in $K_7$, with the property that the edges $\{x_i,\, x_{i+1}\}$ corresponding to pairs of consecutive vertices ``cover'' all the seven triples in $\mathcal{B}.$}

\smallskip
\emph{Given a Fano circuit $\mathcal{C}=(x_1,\, x_2,\, x_3,\, x_4,\, x_5,\, x_6,\, x_7)$ in $\mathcal{F}=(\mathcal{V},\mathcal{B}),$ one can define a binary antisymmetric relation $\rightarrow$ on $\mathcal{V}$ by letting $x_i \rightarrow x_{i+1} \pmod 7$ for each $i$ and extending the relation cyclically on the block containing $x_i$ and $x_{i+1}.$ It is easy to prove that, up to replacing $\rightarrow$ by the inverse orientation $x_{i+1} \twoheadrightarrow x_i$ induced by the backward circuit $(x_7,\, x_6,\, \ldots ,\, x_1),$ $\rightarrow$ is an orientation on $\mathcal{F}$ in the sense of Definition \ref{oriented}. Moreover, for each $i,$ the $3$-set $\{x_i,\, x_{i+1},\, x_{i+2}\} \pmod 7$ is not a block in $\mathcal{F}$ nor in the orthogonal Fano plane $\mathcal{F}_{\rightarrow}.$}

\smallskip
\emph{Conversely, let $\rightarrow$ be an orientation on $\mathcal{F}=(\mathcal{V},\mathcal{B}).$ Then there exists a Fano circuit in $\mathcal{F}$ that induces $\rightarrow$ in the sense just described above. Indeed, let $x_1$ be a given point in $\mathcal{V},$ and let $\{a,b,c\}$ be the triple in $\mathcal{B}$ with the property that $x_1 \rightarrow a,$ $x_1 \rightarrow b,$ and $x_1 \rightarrow c.$ We choose a point in $\{a,b,c\}$ and we denote it by $x_2.$ Then $x_1 \rightarrow x_2$ and, in view of remark (a) above, there exists a unique point $x$ in $\mathcal{V}$ with the property that $x_2 \rightarrow x$ and the $3$-set $\{x_1,\, x_{2},\, x\}$ is not a block in $\mathcal{F}$ nor in the orthogonal Fano plane $\mathcal{F}_{\rightarrow}.$ We set $x_3=x.$ By iterating this algorithm, one constructs a sequence $\mathcal{C}=(x_1,\, x_2,\, x_3,\, x_4,\, x_5,\, x_6,\, x_7),$ which turns out to be a Fano circuit in $\mathcal{F}$ and which, by construction, induces the orientation $\rightarrow$. Note that $\mathcal{C}$ is uniquely determined by the choice of $x_2.$ Since there are three possible choices for $x_2,$ it follows that there exist precisely three Fano circuits in $\mathcal{F}$ that induce the orientation $\rightarrow$. On the other hand, it can be shown that, by identifying each Fano circuit with its backward circuit, there exist exactly $24$ Fano circuits in $\mathcal{F}.$ This finally gives yet another alternative proof that $\mathcal{F}$ admits exactly eight distinct orientations (Corollary \ref{eightorientations}).}
\end{rem}

\begin{rem}\label{remarkoctonions}
\emph{It is worth mentioning that the oriented Fano plane completely describes the algebraic structure of the octonions, which are an $8$-di\-men\-sional algebra over the real numbers, with basis $1,$ $e_1,$ $e_2,$ $e_3,$ $e_4,$ $e_5,$ $e_6,$ $e_7,$ and multiplication table given as in \cite[Table 1]{Baez} (which is known as the Cartan-Schouten-Coxeter octonion multiplication table). The octonions constitute, up to isomorphism, the unique non-associative and non-commutative normed division algebra. The basis element $1$ is also the multiplicative identity, $e_1,\ldots,e_7$ are square roots of $-1,$ and the remaining products are cyclically generated (mod $7$) by the basic products $e_1e_2=e_4$ and $e_2e_1=-e_4.$}

\smallskip
\emph{If we replace the symbols $1,2,\ldots,6$ in Figure \ref{orientedfano2} by $e_1, e_2,\ldots,e_6,$ respectively, and the symbol $0$ by $e_7,$ then the orientation of the blocks of the Fano plane completely determines the product of any two distinct basis elements different from the multiplicative identity $1$: if $\{e_i,e_j,e_k\}$ is a block with orientation $e_i \rightarrow e_j \rightarrow e_k,$ then $e_ie_j=e_k$ and $e_je_i=-e_k$ \cite[p. 152]{Baez} (an earlier version of the same construction can be found in \cite[Figure 2]{Manogue}).}

\smallskip
\emph{It is then immediate that Theorem \ref{OrientedGroup} above on the automorphisms of the oriented Fano plane can be seen as a reformulation of the following result on the automorphisms of the octonions. This result was probably known before the undergraduate thesis \cite{Killgore} that we are quoting here, although no earlier proof seems to have appeared in print.}
\end{rem}

\begin{cor}\label{Frobeniusoctonions} \emph{\cite[\S 4.4]{Killgore}} The group of the automorphisms of the octonions that permute the imaginary basis units $e_1, e_2,e_3,e_4,e_5,e_6,e_7$ is isomorphic to the Frobenius group of order $21.$
\end{cor}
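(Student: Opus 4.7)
The plan is to transfer Theorem \ref{OrientedGroup} along the correspondence between octonion multiplication and the oriented Fano plane described in Remark \ref{remarkoctonions}. Concretely, let $(\bar{\mathcal{F}}_1,\rightarrow)$ be the oriented Fano plane of Example \ref{exoriented}, with point-set $\mathcal{V}=\{0,1,\ldots,6\}$, and identify $e_i$ with $i$ for $i\in\{1,\ldots,6\}$ and $e_7$ with $0$. By Remark \ref{remarkoctonions}, the products $e_ie_j$ of distinct imaginary units are completely encoded by the orientation: whenever $\{e_i,e_j,e_k\}$ is a block and $e_i\rightarrow e_j\rightarrow e_k$, one has $e_ie_j=e_k$ and $e_je_i=-e_k$.

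First I would define a map $\Phi$ from the group $A$ of octonion automorphisms permuting $\{e_1,\ldots,e_7\}$ into the automorphism group of $(\bar{\mathcal{F}}_1,\rightarrow)$, by sending $\sigma\in A$ to its restriction $\sigma|_{\mathcal{V}}$. To see $\Phi(\sigma)$ is an automorphism of the oriented Fano plane, note that if $\{e_i,e_j,e_k\}$ is a block with $e_i\rightarrow e_j\rightarrow e_k$, then $e_ie_j=e_k$, so $\sigma(e_i)\sigma(e_j)=\sigma(e_k)$. Since $\sigma$ permutes the imaginary units, this forces $\{\sigma(e_i),\sigma(e_j),\sigma(e_k)\}$ to be a block with the cyclic orientation $\sigma(e_i)\rightarrow\sigma(e_j)\rightarrow\sigma(e_k)$; hence $\Phi(\sigma)$ preserves both blocks and orientation in the sense of Definitions \ref{definitionsts} and \ref{automoriented}.

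Next I would show $\Phi$ is a bijection. Injectivity is immediate: any octonion automorphism is $\mathbb{R}$-linear and fixes the multiplicative identity $1$, so it is determined by its values on $\{e_1,\ldots,e_7\}$. For surjectivity, let $\tau$ be any automorphism of $(\bar{\mathcal{F}}_1,\rightarrow)$, and extend it $\mathbb{R}$-linearly by setting $\widetilde{\tau}(1)=1$ and $\widetilde{\tau}(e_i)=e_{\tau(i)}$. To verify $\widetilde{\tau}$ is an algebra automorphism, it suffices (by bilinearity) to check multiplication on basis pairs: products involving $1$ are preserved trivially, the squares $e_i^2=-1$ are preserved by linearity, and for $i\neq j$ the identity $\widetilde{\tau}(e_ie_j)=\widetilde{\tau}(e_i)\widetilde{\tau}(e_j)$ follows from the fact that $\tau$ preserves the oriented block containing $e_i$ and $e_j$, together with the encoding above.

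The main (and essentially only) delicate point is this last verification that an oriented-Fano automorphism genuinely extends to an octonion automorphism; beyond that, the result follows at once by combining the isomorphism $\Phi$ with Theorem \ref{OrientedGroup}, which identifies the automorphism group of $(\bar{\mathcal{F}}_1,\rightarrow)$ with the Frobenius group $F_{21}$.
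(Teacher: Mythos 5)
Your proof is correct and takes essentially the same route as the paper, which treats this corollary as an immediate consequence of the octonion/oriented-Fano-plane correspondence in Remark \ref{remarkoctonions}, stating that Theorem \ref{OrientedGroup} ``can be seen as a reformulation'' of the octonion result. You have simply made that correspondence explicit, checking that restriction to the imaginary basis units is a group isomorphism onto $\operatorname{Aut}(\bar{\mathcal{F}}_1,\rightarrow)$, including the nontrivial surjectivity step of extending an oriented-Fano automorphism $\mathbb{R}$-linearly to an algebra automorphism of the octonions.
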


\section{Combinatorial Embeddings of Graphs}\label{SecEmbeddings}
In this section we prove that any triangular embedding of the complete graph $K_7$ into a surface is isomorphic to the classical biembedding into the $2$-torus and hence is face $2$-colorable. Moreover, we show that the group of the embedding automorphisms that preserve the color classes is isomorphic to $F_{21}$.

\smallskip
Let us recall that, if $\psi$ is a biembedding, then the group of the embedding automorphisms that fix the color classes is denoted by $\operatorname{Aut}(\psi,Col)$ (see the terminology introduced in Definition \ref{defembiso}).

\begin{thm}\label{classicalemb}
If $\varphi: K_7 \rightarrow \mathbb{T}^2$ is the classical biembedding of $K_7$ into the $2$-torus, then $\operatorname{Aut}(\varphi,Col)$ is isomorphic to the Frobenius group of order $21$.
\end{thm}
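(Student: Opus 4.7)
The plan is to reduce the statement directly to Theorem \ref{OrthogonalGroup}. As pointed out in the paragraph following Definition \ref{defembiso}, the faces of the classical toroidal biembedding $\varphi$ of $K_7$ are precisely the fourteen triples in $\mathcal{\bar{B}}_1 \cup \mathcal{\bar{B}}_2,$ with the grey color class being $\mathcal{\bar{B}}_1$ (the blocks of $\bar{\mathcal{F}}_1$) and the white color class being $\mathcal{\bar{B}}_2$ (the blocks of $\bar{\mathcal{F}}_2$). This gives a clean identification between combinatorial data on the embedding side and the two orthogonal Fano planes $\bar{\mathcal{F}}_1, \bar{\mathcal{F}}_2$ from Example \ref{ex1}.

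First, I would observe that, since the underlying graph is the \emph{complete} graph $K_7,$ every bijection of the vertex set $\mathcal{V} = \mathbb{Z}_7$ is automatically a graph automorphism of $K_7.$ In view of Definition \ref{defembiso}, an element of $\operatorname{Aut}(\varphi,Col)$ is therefore precisely a permutation $\sigma$ of $\mathcal{V}$ that sends grey faces to grey faces and white faces to white faces, i.e., that maps $\mathcal{\bar{B}}_1$ into $\mathcal{\bar{B}}_1$ and $\mathcal{\bar{B}}_2$ into $\mathcal{\bar{B}}_2.$ By the identification of faces with blocks, this is exactly the statement that $\sigma$ is simultaneously an automorphism of $\bar{\mathcal{F}}_1$ and of $\bar{\mathcal{F}}_2.$ Conversely, any common automorphism of $\bar{\mathcal{F}}_1$ and $\bar{\mathcal{F}}_2$ permutes each color class within itself and hence lies in $\operatorname{Aut}(\varphi,Col).$

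Putting these two directions together, I would conclude that $\operatorname{Aut}(\varphi,Col)$ coincides, as a group of permutations of $\mathcal{V},$ with the group of common automorphisms of the two orthogonal Fano planes $\bar{\mathcal{F}}_1$ and $\bar{\mathcal{F}}_2.$ Applying Theorem \ref{OrthogonalGroup} then yields $\operatorname{Aut}(\varphi,Col) \cong F_{21},$ as desired. Since the structural work has already been carried out in Theorem \ref{OrthogonalGroup}, there is no real obstacle here: the whole point of the argument is the conceptual observation that the defining condition of a color-preserving embedding automorphism and the defining condition of a common Fano plane automorphism are literally the same.
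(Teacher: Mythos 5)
Your proof is correct and follows essentially the same route as the paper's: identify the faces of the classical toroidal embedding with the blocks of the two orthogonal Fano planes $\bar{\mathcal{F}}_1$ and $\bar{\mathcal{F}}_2$, observe that preserving the two color classes is precisely the condition of being a common automorphism of the two planes, and invoke Theorem \ref{OrthogonalGroup}. Your added remark that, $K_7$ being complete, every vertex permutation is automatically a graph automorphism is a small but welcome clarification that the paper leaves implicit.
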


\proof
Let $\mathbb{Z}_7 = \{0,1,2,3,4,5,6\}$ be the vertex-set of $K_7,$ and let $\varphi$ be the embedding of $K_7$ into the $2$-torus presented in Figure \ref{fig1}. By Definition \ref{defembiso}, an embedding automorphism (of $\varphi$) is a permutation $\sigma$ of $\mathbb{Z}_7$ that permutes the faces of $K_7$ induced by $\varphi,$ which are, in turn, the blocks of the Fano planes $\bar{\mathcal{F}}_1=(\mathcal{V},\mathcal{\bar{B}}_1)$ and $\bar{\mathcal{F}}_2=(\mathcal{V},\mathcal{\bar{B}}_2)$ introduced in Example \ref{ex1}. Since the two color classes of $\varphi$ are precisely $\mathcal{\bar{B}}_1$ and $\mathcal{\bar{B}}_2,$ we conclude that $\sigma$ is in $\operatorname{Aut}(\varphi,Col)$ if and only if $\sigma$ is a common automorphisms of the orthogonal Fano planes $\bar{\mathcal{F}}_1$ and $\bar{\mathcal{F}}_2.$ The thesis is now an immediate consequence of Theorem \ref{OrthogonalGroup}.
\endproof

\smallskip
More in general, in the remaining part of this section, we investigate the triangular embeddings of $K_7$ and their automorphisms. First of all, following \cite{GG,GT, JS}, we provide an equivalent, but purely combinatorial, definition of a graph embedding into a surface, assuming that the surface is orientable.
Given a graph $\Gamma = (\mathcal{V},\mathcal{E}),$ we denote by $\mathcal{D}(\Gamma)$ the set of all the ordered pairs $(x,y)$ and $(y,x)$, as $\{x,y\}$ ranges over the edge-set $\mathcal{E},$ and we say that the elements of $\mathcal{D}(\Gamma)$ are the \emph{oriented edges} of $\Gamma$ (note that the cardinality of $\mathcal{D}(\Gamma)$ is twice the cardinality of $\mathcal{E},$ and that we do not assume any prescribed orientation on the edges of $\Gamma$). Also, given a vertex $x$ in $\mathcal{V}$, we denote by $\mathcal{N}(\Gamma,x)$ the set of all the vertices adjacent to $x$ in $\Gamma$.

\begin{defn}\label{DefEmbeddings}
\emph{Let $\Gamma = (\mathcal{V},\mathcal{E})$ be a connected graph. A \emph{combinatorial embedding} of $\Gamma$ is a pair $\Pi=(\Gamma,\rho)$ where $\rho: D(\Gamma)\rightarrow D(\Gamma)$ is a map satisfying the following property: for any $x$ in $\mathcal{V},$ there exists a permutation $\rho_x$ of $\mathcal{N}(\Gamma,x)$, which is a cycle of order $|\mathcal{N}(\Gamma,x)|$ and is such that, for any $y\in \mathcal{N}(\Gamma,x)$, $\rho(x,y)=(x,\rho_x(y)).$ If this is the case, then the map $\rho$ is said to be a \emph{rotation} of $\Gamma$.}
\end{defn}

As reported in \cite{GG}, a cellular embedding of a (connected) graph $\Gamma$ into an orientable surface is equivalent to a combinatorial embedding $\Pi=(\Gamma,\rho)$ (see also \cite[Theorem 3.1]{A} and \cite{DP}). This yields, in particular, a purely combinatorial description of the faces of $\Gamma$ induced by the embedding. Indeed, in the case of a combinatorial embedding $(\Gamma,\rho)$, an edge $\{x,y\}$ belongs to exactly two faces $T_1$ and $T_2,$ which can be determined via the so-called \emph{face-tracing algorithm} as follows:

\begin{equation}\label{facetracing}
\begin{array}{ccl}
  T_1 & = & (x,\rho_x(y),\rho_{\rho_x(y)}(x),\dots, y) \\
  & & \\
  T_2 & = & (y,\rho_y(x),\rho_{\rho_y(x)}(y),\dots,x).
\end{array}
\end{equation}


\smallskip
We also recall that the notions of embedding isomorphism and embedding automorphism can be defined purely combinatorially as well, as follows (see Korzhik and Voss \cite{Korzhik}, page 61).

\begin{defn}\label{DefEmbeddingsIs}
\emph{Let $\Gamma$ and $\Gamma'$ be two connected graphs, and let $\Pi= (\Gamma,\rho)$ and $\Pi'= (\Gamma',\rho')$ be two combinatorial embeddings of $\Gamma$ and $\Gamma'$, respectively. We say that $\Pi$ is \emph{isomorphic} to $\Pi'$ if there exists a graph isomorphism $\sigma: \Gamma\rightarrow \Gamma'$ such that either
\begin{equation}\label{eq11}
\sigma\circ \rho(x,y)=\rho'\circ \sigma(x,y)\;\;\;\;\;\;\;\;\;\; \mbox{for any $(x,y)\in D(\Gamma)$}
\end{equation}
or
\begin{equation}\label{eq12}
\sigma\circ \rho(x,y)=(\rho')^{-1}\circ \sigma(x,y)\;\;\;\;\; \mbox{for any $(x,y)\in D(\Gamma)$}.
\end{equation}}

\emph{We also say that $\sigma$ is an \emph{embedding isomorphism} between $\Pi$ and $\Pi'$.
Moreover, if equation (\ref{eq11}) holds, then $\sigma$ is said to be an \emph{orientation-preserving isomorphism}, whereas,
if (\ref{eq12}) holds, then $\sigma$ is said to be an \emph{orientation-reversing isomorphism}. Also, if $\Gamma'=\Gamma$ and $\rho'=\rho$ (that is, $\Pi'=\Pi$), then we say that $\sigma$ is an \emph{embedding automorphism} of $\Pi.$ We denote by $\operatorname{Aut}(\Pi)$ the group of all the embedding automorphisms of $\Pi.$ Finally, if $\Pi$ admits a face $2$-coloring, then we denote by $\operatorname{Aut}(\Pi,Col)$ the group of the embedding automorphisms of $\Pi$ that fix the color classes.}
\end{defn}

\smallskip
Let us first consider the classical toroidal embedding of $K_7$, seen as a combinatorial embedding. As explained in \cite{MT}, the rotation $\rho$ associated with the biembedding of $K_7$ in Figure \ref{fig1}, with vertex-set $\mathbb{Z}_7 = \{0,1,2,3,4,5,6\},$ can be obtained through the two following steps.

\begin{itemize}
\item[1)] Given $x\in \mathbb{Z}_7$, the map $\rho_x$ is defined by looking at the neighbors of $x$ in a clockwise order.
For example, for $x=3,$ $\rho_3$ is the cyclic permutation $(0\:2\:5\:6\:4\:1)$ of $\mathbb{Z}_7$.

\item[2)] By Definition \ref{DefEmbeddings}, $\rho(x,y)= (x,\rho_x(y))$ for any $(x,y)\in D(\Gamma)$.
\end{itemize}

\smallskip
These two steps are enough to determine the map $\rho$. Alternatively, however, for a given $x\in \mathbb{Z}_7,$ one can define the map $\rho$ by clockwise rotation just on the oriented edges starting from $x$, and then extend the map $\rho$ on the remaining oriented edges as follows.
\begin{itemize}
\item[3)] Given a vertex $x$ in $\mathbb{Z}_7,$ every oriented edge of $K_7$ can be written in the form $(x+z,y+z),$ where the sum is performed modulo $7$. Then we set
    \begin{equation*}(\star) \ \ \ \ \ \ \ \rho(x+z,y+z)=(x+z,\rho_x(y)+z).\end{equation*}
\end{itemize}

By doing so, one can easily check that the expression $(\star)$ for $\rho$ is independent of the initial vertex $x$. Also, if $\tau_z$ is the permutation of $\mathbb{Z}_7$ defined by $\tau_z(x)=x+z \pmod 7$, then property $(\star)$ can be written as
$$\rho\circ \tau_z(x,y) = \tau_z\circ \rho(x,y).$$

\smallskip
By Definition \ref{DefEmbeddingsIs}, this means that, for any $z\in \mathbb{Z}_7$, the map $\tau_z$ is an orientation-preserving automorphism of the combinatorial embedding $\bar{\Pi} = (K_7,\rho)$. In particular, following the notation introduced in the same definition, $\tau_z \in \operatorname{Aut}(K_7,\rho)$ for any $z\in \mathbb{Z}_7$. In addition, the embedding $(K_7,\rho)$ is $\mathbb{Z}_7$-regular, that is, it admits $\mathbb{Z}_7$ as a strictly transitive group of automorphisms on the vertices of $K_7.$ Note that, on the torus, the permutations $\tau_z$ can be visualized as one-step translations along the ``lines'' in Figure \ref{fig1} (see the subsequent Remark \ref{rototranslations}). Finally recall that, by Definition \ref{DefEmbeddingsIs}, $\operatorname{Aut}(\bar{\Pi},Col)$ denotes the group of the embedding automorphisms of $\bar{\Pi} = (K_7,\rho)$ that fix the color classes.

\smallskip
We can now state an equivalent version of Theorem \ref{classicalemb}.

\begin{thm}\label{mainthmemb}
If $\bar{\Pi} = (K_7,\rho)$ is the combinatorial embedding of Figure \ref{fig1}, then $\operatorname{Aut}(\bar{\Pi},Col)\cong F_{21}$.
\end{thm}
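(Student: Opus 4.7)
The plan is to reduce the statement to Theorem \ref{OrthogonalGroup} by first identifying the faces of $\bar{\Pi}$ with the $14$ blocks of the two orthogonal Fano planes of Example \ref{ex1}, and then translating the color-preserving embedding automorphism condition into the condition of being a common automorphism of those two Fano planes.

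I would begin by running the face-tracing algorithm \eqref{facetracing} on $\bar{\Pi}=(K_7,\rho)$. Since $\rho\circ\tau_z=\tau_z\circ\rho$ for every $z\in\mathbb{Z}_7$, the face set of $\bar{\Pi}$ is $\mathbb{Z}_7$-invariant, so it suffices to trace the two faces incident with a single edge, say $\{0,1\}$. Reading $\rho_0$ clockwise from Figure \ref{fig1}, these two faces come out as the triangles $\{0,1,3\}$ and $\{0,1,5\}$; by the $\mathbb{Z}_7$-invariance, the full face list is therefore $\mathcal{\bar{B}}_1\cup\mathcal{\bar{B}}_2$, and the two color classes of $\bar{\Pi}$ are exactly $\mathcal{\bar{B}}_1$ and $\mathcal{\bar{B}}_2$.

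Next, I would show that $\operatorname{Aut}(\bar{\Pi},Col)$ coincides with the group of common automorphisms of $\bar{\mathcal{F}}_1$ and $\bar{\mathcal{F}}_2$. The forward inclusion is immediate: any $\sigma\in\operatorname{Aut}(\bar{\Pi},Col)$ permutes the faces by \eqref{facetracing} and preserves the colors, hence it preserves each $\mathcal{\bar{B}}_i$. For the converse, let $\sigma$ be a common automorphism of $\bar{\mathcal{F}}_1$ and $\bar{\mathcal{F}}_2$. At each vertex $x$, the cycle $\rho_x$ is determined up to inversion by the (unordered) set of faces through $x$, since two neighbors of $x$ are consecutive in $\rho_x$ precisely when they lie together with $x$ in a face. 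Consequently $\sigma\circ\rho_x=\rho_{\sigma(x)}^{\varepsilon(x)}\circ\sigma$ for some $\varepsilon(x)\in\{\pm 1\}$. The main technical point—and the step I expect to require the most care—is to check that the local sign $\varepsilon(x)$ is independent of $x$: if $\{x,y,z\}$ is a face, the induced cyclic orientation of this face can be read off at each of its three vertices, and consistency forces $\varepsilon(x)=\varepsilon(y)=\varepsilon(z)$; since $K_7$ is connected, $\varepsilon$ is globally constant, so $\sigma$ satisfies \eqref{eq11} or \eqref{eq12} and thus lies in $\operatorname{Aut}(\bar{\Pi})$. The statement now follows at once from Theorem \ref{OrthogonalGroup}.
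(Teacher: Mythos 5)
Your proof is correct, but it takes a genuinely different route from the one in the paper. The paper offers two arguments. The first simply invokes the equivalence between the combinatorial embedding $\bar{\Pi}$ and the topological embedding $\varphi$, and appeals to Theorem \ref{classicalemb}, where the topological Definition \ref{defembiso} makes the equality between $\operatorname{Aut}(\varphi,Col)$ and the group of common automorphisms of $\bar{\mathcal{F}}_1$ and $\bar{\mathcal{F}}_2$ immediate. The second, ``direct and independent'' argument uses only the inclusion $\operatorname{Aut}(\bar{\Pi},Col)\subseteq\operatorname{Aut}\bar{\mathcal{F}}_1\cap\operatorname{Aut}\bar{\mathcal{F}}_2$ (your \emph{forward} direction) together with an explicit exhibition of $\lambda_2$ and $\tau_1$ inside $\operatorname{Aut}(\bar{\Pi},Col)$, checked directly against property $(\star)$: since these two elements generate a subgroup of order a multiple of $21$ while the ambient group has order exactly $21$ by Theorem \ref{OrthogonalGroup}, the inclusion must be an equality. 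This entirely sidesteps the issue you isolate in the \emph{reverse} direction, namely that a face-color-preserving permutation must be shown to satisfy one of the combinatorial intertwining conditions \eqref{eq11} or \eqref{eq12}. Your handling of that point --- the local sign $\varepsilon(x)$ is forced to be constant across each face, essentially by Lemma \ref{3emb}, and then propagates globally since every two vertices of $K_7$ share a face --- is sound and, once the consistency claim is spelled out via the three identities of Lemma \ref{3emb}, complete. In fact your argument proves a slightly stronger statement: for any \emph{triangular} combinatorial embedding, every permutation preserving the induced face set is automatically an embedding automorphism. This is precisely the step the paper's one-line proof outsources to the topological equivalence. The trade-off is that your route requires the local-sign bookkeeping, whereas the paper's direct proof is a shorter sandwich argument resting on two explicit generators and the prior order computation of Theorem \ref{OrthogonalGroup}.
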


\proof
Since the combinatorial embedding $\bar{\Pi} = (K_7,\rho)$ is equivalent to the embedding $\varphi: K_7 \rightarrow \mathbb{T}^2$ of Figure \ref{fig1}, the thesis immediately follows from Theorem \ref{classicalemb}.

\smallskip
One can also give a direct and independent proof as follows. As usual, let $\mathbb{Z}_7 = \{0,1,2,3,4,5,6\}$ be the vertex-set of $K_7.$ For any non-zero $c$ in $\mathbb{Z}_7,$ let $\lambda_c$ be the permutation of the vertices defined by $\lambda_c(x)=cx \pmod{7}.$
One can check directly, by inspection, that the maps $\lambda_c,$ $c \neq 0,$ are orientation-preserving automorphisms of $\bar{\Pi}.$
This can be proved, alternatively, as follows. By looking at Figure \ref{fig1}, one readily finds that $\rho_0$ is the cyclic permutation
$(1\:5\:4\:6\:2\:3).$ Equivalently, $\rho_0 = \lambda_{5}$. It now follows from property $(\star)$ above that, for any $x$ and any $y \neq x,$ $$\rho(x,y) = \rho(0+x, (y-x)+x) = (0+x, \rho_0(y-x)+x) = (x,5y-4x),$$ hence $\rho(cx,cy)=(cx,5cy-4cx)=c\rho(x,y),$ that is,
$\rho \lambda_{c} = \lambda_{c} \rho,$ for all $c.$ Equivalently, by Definition \ref{DefEmbeddingsIs}, $\lambda_{c}$ is an orientation-preserving automorphism of $\bar{\Pi}$ for all $c.$

\smallskip
Now the map $\lambda_2$ is an element of $\operatorname{Aut}(\bar{\Pi},Col)$ (see Example \ref{lambdatwo}). Moreover, we noted earlier that $\tau_1$ is an automorphism of $\bar{\Pi}.$ In fact, by definition of the coloring of $\bar{\Pi}$, $\tau_1$ fixes the color classes and hence is an element of $\operatorname{Aut}(\bar{\Pi},Col)$. Since $\lambda_2$ has order three and $\tau_1$ has order seven, the order of $\operatorname{Aut}(\bar{\Pi},Col)$ must be a multiple of $21$.

\smallskip
On the other hand, the color classes define a pair of orthogonal Fano planes (see Example \ref{ex1}). Therefore it follows from Theorem \ref{OrthogonalGroup} that $\operatorname{Aut}(\bar{\Pi},Col)$ is isomorphic to a subgroup of $F_{21}$. Since its size is a multiple of $21$, we conclude that $\operatorname{Aut}(\bar{\Pi},Col)\cong F_{21},$ as claimed.
\endproof

\begin{rem}\label{rototranslations} \emph{The proof of Theorem \ref{mainthmemb} shows that the group $\operatorname{Aut}(\bar{\Pi},Col)\cong F_{21}$ is generated by the permutations $\lambda_2(x)=2x$ and $\tau_1(x)=x+1 \pmod 7$ of $\mathbb{Z}_7 = \{0,1,2,3,4,5,6\}$ (note the analogy with Remark \ref{remlambdatau}). This gives, by means of Figure \ref{fig1}, a very simple and elegant way to visualize the $21$ permutations $x \mapsto ax+b$ ($a \in \{1,2,4\},$ $b \in \mathbb{Z}_{7}$) in $F_{21}$ as a group of transformations of the torus.}

\smallskip
\emph{Indeed, every permutation of the form $x \mapsto 2x+b$ (resp., $x \mapsto 4x+b$) can be seen as an order-$3$ counterclockwise (resp., clockwise) rotation around the point $-b$ (resp., $2b$), whereas every permutation of the form $x \mapsto x+b,$ $b \neq 0,$ can be seen as a one-step translation along some ``line'' in Figure \ref{fig1}. For instance, the permutation $x \mapsto 2x+4$ (resp., $x \mapsto 4x+5$) represents an order-$3$ counterclockwise (resp., clockwise) rotation around the point $3,$ whereas the permutation $x \mapsto x+3$ (resp., $x \mapsto x-3$) can be visualized as a one-step upward (resp., downward) translation along the ``lines'' in Figure \ref{fig1} parallel to the line containing $0,3,$ and $6.$}
\end{rem}

\begin{rem}\label{cylicorientationtorus} \emph{We noted in Remark \ref{cyclicorientation}(a) that the orientation $\rightarrow$ defined in Example \ref{exoriented} (also represented in Figure \ref{orientedfano2}) induces a cyclic orientation on the triples of both the orthogonal Fano planes $\bar{\mathcal{F}}_1$ and $\bar{\mathcal{F}}_2$ introduced in Example \ref{ex1}. This property can be easily visualized by means of the representation of the blocks in $\bar{\mathcal{F}}_1$ and $\bar{\mathcal{F}}_2$ as the grey and white triangular faces in Figure \ref{fig1}, respectively. Indeed, for any pair of distinct points $x,y$ in $\mathbb{Z}_7 = \{0,1,2,3,4,5,6\},$ let us reproduce on the edge $\{x,y\}$  in Figure \ref{fig1} the same arrow sign that appears in Figure \ref{orientedfano2} (equivalently, the arrow mark points from $x$ to $y$ if and only if $y-x \in \{1,2,4\}$ (mod $7$)). Then, as a result, every grey (resp. white) face ends up displaying a cyclic counterclockwise (resp. clockwise) orientation. For instance, there is a counterclockwise orientation $0 \rightarrow 1 \rightarrow 3 \rightarrow 0$ on the grey face $\{0,1,3\}$ in the bottom-right corner of the picture, and a clockwise orientation $0 \rightarrow 4 \rightarrow 6 \rightarrow 0$ on the white face $\{0,4,6\}$ in the upper-left corner. Finally, note that the orientation $\rightarrow$ is induced by the Fano circuit $\mathcal{C}=(0,\, 1,\, 2,\, 3,\, 4,\, 5,\, 6),$ in the sense of Remark \ref{cyclicorientation}(d).}
\end{rem}

\smallskip
In the final part of this section, we generalize Theorems \ref{classicalemb} and \ref{mainthmemb} to an arbitrary triangular embedding of $K_7$ into a surface. We prove that, no matter how we take a triangular embedding of $K_7,$ the embedding is face $2$-colorable and isomorphic to the classical toroidal (combinatorial) embedding $\bar{\Pi} = (K_7,\rho)$. We first consider the case where the surface is orientable. As we recalled earlier in this section, this is equivalent to considering a triangular combinatorial embedding of $K_7$.

\begin{lem}\label{3emb}
Let $\Pi'=(K_7,\rho')$ be a triangular combinatorial embedding of $K_7$. If $\rho'_x(y)=z$, then $\rho'_z(x)=y$ and $\rho'_y(z)=x$.
\end{lem}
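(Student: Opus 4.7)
The proof will be a direct application of the face-tracing algorithm (\ref{facetracing}), exploiting the hypothesis that every face of $\Pi'$ has length three.

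First, I would apply formula (\ref{facetracing}) to the edge $\{x,y\}$. One of the two faces of $\Pi'$ containing $\{x,y\}$ is written as
$$T_1 \;=\; (x,\, \rho'_x(y),\, \rho'_{\rho'_x(y)}(x),\, \ldots,\, y),$$
which, upon substituting $\rho'_x(y) = z$, takes the initial form $(x,\, z,\, \rho'_z(x),\, \ldots,\, y)$. Since $\Pi'$ is triangular, $T_1$ has exactly three vertices, so the third entry of the sequence must already coincide with the final entry $y$. This forces $\rho'_z(x) = y$, which is the first identity; moreover, it identifies $T_1$ as the (cyclically ordered) triangle $(x,\, z,\, y)$.

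For the second identity $\rho'_y(z) = x$, I would apply formula (\ref{facetracing}) a second time, now to the edge $\{y,z\}$ with $y$ in the initial position. By the cyclic structure $x \to z \to y \to x$ of $T_1$ established above, the cyclic predecessor of $y$ in $T_1$ is $z$, which is precisely the configuration needed for (\ref{facetracing}) to reproduce $T_1$ (rather than the other face through the edge $\{y,z\}$). Formula (\ref{facetracing}) thus yields $T_1 = (y,\, \rho'_y(z),\, \ldots,\, z)$, a length-three sequence that must coincide with $(y,\, x,\, z)$. Hence $\rho'_y(z) = x$, as required.

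I do not expect any real obstacle beyond correctly matching the orientation of the traversal, i.e.\ verifying that the second application of (\ref{facetracing}) indeed returns the same face $T_1$ and not the other face incident to $\{y,z\}$. Conceptually, the statement is nothing more than the remark that in any triangular face $(v_1, v_2, v_3)$ the local rotation $\rho'_{v_i}$ at each vertex sends its cyclic predecessor in the face to its cyclic successor; given $\rho'_x(y) = z$, the two claimed identities are just the analogous statements read off at the remaining vertices $z$ and $y$ of the triangle $T_1$.
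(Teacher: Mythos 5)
Your proposal is correct and takes essentially the same approach as the paper: both apply the face-tracing algorithm \eqref{facetracing} twice and invoke triangularity to force the third vertex of the trace to equal the closing vertex. The only cosmetic difference is that for the second identity you restart the trace at $y$ along the edge $\{y,z\}$, whereas the paper restarts at $z$ along $\{z,x\}$; both re-traversals recover the same triangle $T_1$ and give $\rho'_y(z)=x$.
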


\begin{proof}
Suppose that $\rho'_x(y)=z$. Because of the face-tracing algorithm (see \eqref{facetracing}), the face containing $x,y,$ and $z$ is
$$(x,\rho'_x(y)=z,\rho'_{z}(x),\dots,y).$$

\smallskip
Since the embedding is triangular, it follows that $\rho'_z(x)=y$. Similarly, starting from $z,$ the same face can be written as
$$(z, \rho'_{z}(x)=y,\rho'_{y}(z),\dots,x).$$

\smallskip
Since, again, the embedding is triangular, we conclude that $\rho'_y(z)=x$.
\end{proof}

\begin{prop}\label{orientable}
Let $\Pi'=(K_7,\rho')$ be a triangular combinatorial embedding of $K_7$. Then $\Pi'$ is isomorphic to the classical toroidal embedding $\bar{\Pi} = (K_7,\rho)$.
\end{prop}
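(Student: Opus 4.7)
The plan is to first use Euler's formula and Lemma \ref{3emb} to reduce to a face-matching problem on a $2$-$(7,3,2)$-design, and then invoke Proposition \ref{p1} to conclude. Since $\Pi'$ is triangular, $3F = 2E = 42$ gives $F = 14$, and $V - E + F = 0$ forces the surface to be the torus. The 14 faces form a simple $2$-$(7,3,2)$-design on $\mathbb{Z}_7$, since each edge of $K_7$ lies in exactly two distinct faces. The rotation $\rho'_0$ is a $6$-cycle on $\{1,\dots,6\}$; after relabeling the six vertices distinct from $0$, we may assume $\rho'_0 = (1\,5\,4\,6\,2\,3) = \rho_0$, so that the six faces of $\Pi'$ through $0$ coincide with those of $\bar{\Pi}$. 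Lemma \ref{3emb} then determines, for each $y \neq 0$, two consecutive values of $\rho'_y$ (a length-three subchain) matching the corresponding subchain of $\rho_y$; for instance, $\rho'_1$ has subchain $5 \to 0 \to 3$.

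To complete the argument, write $\rho'_1 = (a, b, c, 5, 0, 3)$ with $\{a, b, c\} = \{2, 4, 6\}$, giving six a priori choices. For each, Lemma \ref{3emb} applied to the faces $\{1, a, b\}, \{1, b, c\}, \{1, c, 5\}, \{1, 3, a\}$ determines additional values of $\rho'_3, \rho'_5, \rho'_a, \rho'_b, \rho'_c$, which must be compatible with the subchains already determined from $\rho'_0$. A short direct case analysis shows that only the cyclic orderings $(4, 2, 6)$ and $(6, 4, 2)$ extend to self-consistent rotation systems, and in both cases the resulting $14$-face set $\mathcal{B}'$ splits as a disjoint union of two orthogonal Fano planes. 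The case $(4, 2, 6)$ reproduces $\bar{\Pi}$ directly; in the case $(6, 4, 2)$, Proposition \ref{p1} provides a permutation of $\mathbb{Z}_7$ sending the resulting pair of orthogonal Fano planes to $(\bar{\mathcal{F}}_1, \bar{\mathcal{F}}_2)$, and hence $\mathcal{B}'$ to $\bar{\mathcal{B}}_1 \cup \bar{\mathcal{B}}_2$. Once the face sets agree, the rotation at every vertex is determined up to reversal by the edge-sharing $6$-cycle on the incident faces, so $\rho'$ equals $\rho$ or $\rho^{-1}$; both yield isomorphic embeddings by Definition \ref{DefEmbeddingsIs}.

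The main obstacle is the case analysis in the third step: it is finite but tedious, and in particular requires verifying that each self-consistent choice of $(a, b, c)$ yields a face set that decomposes into two Fano planes. A more conceptual alternative would establish the Fano-plane decomposition of $\mathcal{B}'$ globally, for instance by showing that the dual graph of any triangular embedding of $K_7$ on the torus is $3$-regular and bipartite --- so that it automatically admits a face $2$-coloring whose color classes are Steiner triple systems, and Proposition \ref{p1} finishes the proof without any case analysis.
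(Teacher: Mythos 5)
Your overall strategy is the same as the paper's: normalize so that $\rho'_0=\rho_0$, use Lemma~\ref{3emb} to propagate constraints to neighboring rotations, and run a finite case analysis. (The paper pivots on $\rho'_5$ rather than $\rho'_1$, but this is immaterial.) The one genuine difference is the endgame: the paper, in each of its two surviving cases, computes the full rotation system explicitly and then directly exhibits the embedding isomorphism ($\rho'=\rho$, or $\sigma=(2\,4)(3\,5)$ giving an orientation-reversing isomorphism), while you instead propose to observe that the $14$-face set decomposes into a pair of orthogonal Fano planes and then invoke Proposition~\ref{p1}. Both routes work, but yours is not obviously shorter, since it still requires computing the face set in each surviving case.

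There are, however, several places where the argument as written is a sketch rather than a proof, and one where it needs an extra step. First, the central reduction --- ``a short direct case analysis shows that only $(4,2,6)$ and $(6,4,2)$ extend to self-consistent rotation systems'' --- is asserted, not carried out; this is precisely the content of the paper's proof and cannot be waved away (in fact two of the six orderings are excluded by $3$-cycle contradictions in $\rho'_3$ and $\rho'_5$, and a third, $(4,6,2)$, is excluded because it forces $\rho'_4(6)=1$, contradicting $\rho'_4(6)=0$). Second, the claim that in each surviving case the face set decomposes into two orthogonal Fano planes is also asserted; it is true, but verifying it requires essentially the same explicit computation of the rotation system that the paper does. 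Third, your use of Proposition~\ref{p1} is slightly off: that proposition is stated for two pairs $(\mathcal{F},\mathcal{S}_1)$, $(\mathcal{F},\mathcal{S}_2)$ sharing the \emph{same} first Fano plane $\mathcal{F}$, whereas the first Fano plane in the decomposition of $\Pi'$'s face set need not be $\bar{\mathcal{F}}_1$ (and in the $(6,4,2)$ case it indeed is not). You first need an isomorphism of the first Fano plane onto $\bar{\mathcal{F}}_1$, and only then can Proposition~\ref{p1} be applied to align the second. Fourth, the final step ``the rotation at every vertex is determined up to reversal by the incident faces, so $\rho'$ equals $\rho$ or $\rho^{-1}$'' jumps from $2^7$ per-vertex choices to $2$ global ones; the coherence of the reversals across vertices is exactly what Lemma~\ref{3emb} (together with the connectivity of $K_7$) provides, and should be cited. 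Finally, two smaller remarks: the Euler-characteristic computation is not needed here, since a combinatorial embedding is orientable by definition (it is used by the paper only in the subsequent corollary to rule out the Klein bottle), and the ``conceptual alternative'' you mention (showing the dual graph is bipartite) is not obviously independent --- bipartiteness of the dual is equivalent to face $2$-colorability, which is the very thing one is trying to establish.
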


\begin{proof}
As usual, we identify the vertex-set of $K_7$ with $\mathbb{Z}_7 = \{0,1,2,3,4,5,6\}.$ In view of Definition \ref{DefEmbeddingsIs}, up to replacing $\rho'$ by $\sigma \rho' \sigma^{-1},$ where $\sigma$ is a suitable permutation of the vertices, we may assume that $$\rho'_0=(1\;5\;4\;6\;2\;3),$$ as in $\bar{\Pi}$. In particular, $\rho'_0(5)=4$ and $\rho'_0(1)=5.$ Therefore, since the embedding is triangular, it follows from Lemma \ref{3emb} that $\rho'_5$ is of the form $(4\;0\;1\;x\;y\;z),$ where $\{x,y,z\}=\{2,3,6\}$.
Suppose, by contradiction, that $\rho'_5(1)=3$ (resp., $\rho'_5(6)=4$). Together with $\rho'_0(1)=5$ and $\rho'_0(3)=1$ (resp., $\rho'_0(5)=4$ and $\rho'_0(4)=6$), this would imply, by Lemma \ref{3emb}, that the permutation $\rho'_1$ (resp., $\rho'_4$) contains the $3$-cycle $(3\;5\;0)$ (resp., $(5\;6\;0)$), contradicting the fact that $\rho'_1$ (resp., $\rho'_4$) has order six by Definition \ref{DefEmbeddings}. Hence $x \neq 3$ and $z \neq 6$. Now  $\rho'_0(6)=2$ implies, by Lemma \ref{3emb}, that $\rho'_2(0)=6$; hence $\rho'_2(5) \neq 6,$ which implies, in turn, that $\rho'_5(6)\neq 2.$ It finally follows that $\rho'_5$ is either $(4\;0\;1\;6\;3\;2)$ or $(4\;0\;1\;2\;6\;3)$.

\smallskip
In the former case, the conditions $\rho'_5(1)=6$, $\rho'_0(1)=5,$ and $\rho'_0(3)=1$ imply, by Lemma \ref{3emb}, that $\rho'_1(6)=5$, $\rho'_1(5)=0$, and $\rho'_1(0)=3$, respectively, hence $\rho'_1$ is of the form $(6\;5\;0\;3\;x\;y)$, where $\{x,y\} = \{2,4\}.$ Now $\rho'_5(3)=2 \Rightarrow \rho'_3(2)=5 \Rightarrow \rho'_3(2) \neq 1 \Rightarrow x=\rho'_1(3) \neq 2,$ thus $\rho'_1 = (6\;5\;0\;3\;4\;2).$ By reasoning in the same way, one finds that $\rho'_2 = (0\;6\;1\;4\;5\;3),$ $\rho'_3 = (4\;1\;0\;2\;5\;6),$ $\rho'_4 = (6\;0\;5\;2\;1\;3),$ and $\rho'_6 = (3\;5\;1\;2\;0\;4).$ Hence $\rho'=\rho,$ and $\Pi'$ is isomorphic to $\bar{\Pi}$.

\smallskip
Let us finally assume that $\rho'_5=(4\;0\;1\;2\;6\;3)$. By arguing as in the previous case, one finds that
$$\rho'_1=(2\;5\;0\;3\;6\;4)$$
$$\rho'_2=(3\;0\;6\;5\;1\;4)$$
$$\rho'_3=(1\;0\;2\;4\;5\;6)$$
$$\rho'_4=(2\;1\;6\;0\;5\;3)$$
$$\rho'_6 \:=(1\;3\;5\;2\;0\;4).$$

\smallskip
Now the permutation $\sigma = (2\;4)(3\;5)$ of the vertex-set $\{0,1,2,3,4,5,6\}$ is such that
$$\sigma\circ \rho'(x,y)=\rho^{-1}\circ \sigma(x,y)$$ for all oriented edges $(x,y)$ in $K_7,$ as it can be easily checked by inspection.
Hence, by Definition \ref{DefEmbeddingsIs}, $\Pi'=(K_7,\rho')$ is isomorphic to $\bar{\Pi}=(K_7,\rho),$ as claimed.
\end{proof}

\smallskip
We finally consider the case of an arbitrary triangular embedding of $K_7$ into a surface, without assuming that the surface be orientable.

\smallskip
\begin{cor}
Let $\psi$ be a triangular embedding of $K_7$ into a (non-necessarily orientable) surface $\Sigma$. Then the embedding admits a unique face $2$-coloring and $$\operatorname{Aut}(\psi,Col) \cong F_{21}.$$
\end{cor}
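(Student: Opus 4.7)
The strategy is to reduce to the orientable (toroidal) case already handled by Proposition \ref{orientable} and Theorem \ref{mainthmemb}. First, I would apply Euler's formula: since $K_7$ has $v=7$ vertices and $e=21$ edges, any triangular embedding $\psi$ satisfies $f = 2e/3 = 14$, whence $\chi(\Sigma) = v-e+f = 0$. Consequently $\Sigma$ must be either the torus $\mathbb{T}^2$ or the Klein bottle $\mathbb{K}$.

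The heart of the argument is to rule out the case $\Sigma = \mathbb{K}$. I expect this to be the main technical obstacle, as it does not follow from any of the previous results in the paper. The cleanest route is to appeal to the classical theorem of Franklin stating that the non-orientable genus of $K_7$ is $3$ (an exceptional case of Ringel's formula for the non-orientable genus of complete graphs), so that $K_7$ does not embed in $\mathbb{K}$ at all---let alone triangularly. Alternatively, a self-contained argument can be given by lifting a hypothetical triangulation of $\mathbb{K}$ by $K_7$ to its orientable double cover (which is $\mathbb{T}^2$, since $\chi$ doubles) and deriving a contradiction from the induced $2$-fold triple structure on the lift of the vertex set.

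Once $\Sigma = \mathbb{T}^2$ has been established, I would translate $\psi$ into a combinatorial embedding $(K_7,\rho')$ and invoke Proposition \ref{orientable} to obtain an embedding isomorphism $\sigma$ from $(K_7,\rho')$ onto the classical $\bar{\Pi}=(K_7,\rho)$. Through $\sigma$, the $14$ faces of $\psi$ correspond bijectively to the union of the blocks of the two orthogonal Fano planes $\bar{\mathcal{F}}_1$ and $\bar{\mathcal{F}}_2$ of Example \ref{ex1}, and the partition $\{\bar{\mathcal{B}}_1,\bar{\mathcal{B}}_2\}$ transfers to a face $2$-coloring of $\psi$.

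For the uniqueness of this coloring (up to swapping the two colors), I would observe that any face $2$-coloring of $\psi$ is the same as a proper $2$-coloring of the face-adjacency (dual) graph. This graph is $3$-regular, simple (two distinct triangular faces of a simple-graph embedding in a surface other than the sphere can share at most one edge, which forces $\Sigma$ to be a sphere otherwise), and connected (as $\Sigma$ is connected); so, being bipartite, it admits a unique bipartition into independent sets. Finally, the isomorphism $\sigma$ carries the automorphisms and color classes over verbatim, yielding $\operatorname{Aut}(\psi,Col) \cong \operatorname{Aut}(\bar{\Pi},Col) \cong F_{21}$ by Theorem \ref{mainthmemb}, which completes the argument.
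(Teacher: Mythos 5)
Your proof is correct and follows essentially the same route as the paper's: Euler's formula gives $\chi(\Sigma)=0$, the Klein bottle is ruled out by appealing to the classical non-embeddability of $K_7$ in the Klein bottle (the paper cites Ringel's \emph{Map Color Theorem}, you cite Franklin's non-orientable genus result, which is the same fact), and then Proposition~\ref{orientable} and Theorem~\ref{mainthmemb} close the argument. The only genuine differences are minor: you sketch (without carrying out) an alternative double-cover argument for excluding the Klein bottle, and you spell out the uniqueness of the face $2$-coloring via bipartiteness and connectedness of the dual graph, where in fact the connectedness alone suffices, so your parenthetical claim that distinct faces share at most one edge is unnecessary for the conclusion.
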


\begin{proof}
As in the orientable case, each edge of $K_7$ belongs to exactly two faces. Indeed, the two faces can be determined by means of the face-tracing algorithm induced by a suitable combinatorial embedding, which can be associated to $\psi$ also in the non-orientable case (by generalizing \cite[Definition 5.2]{CostaMellaPasotti}). Since the faces of $\psi$ are triangular, the total number of faces is $14$. Then, by Euler's formula, the Euler characteristic of $\Sigma$ is
$$\chi=F+V-E=14+7-21=0.$$

Therefore $\Sigma$ is either the torus or the Klein bottle. On the other hand, it is well known that $K_7$ does not admit embeddings into the Klein bottle (see, e.g., Theorem 4.11, p. 69, in \cite{Map}). Therefore $\Sigma$ is orientable and hence, because of Proposition \ref{orientable}, $\psi$ is isomorphic to the classical toroidal (combinatorial) embedding $\bar{\Pi} = (K_7,\rho)$. The thesis now follows immediately from Theorem \ref{mainthmemb} and from the uniqueness of the face $2$-coloring of $\bar{\Pi}$.
\end{proof}

\section*{Acknowledgements}
The authors were partially supported by INdAM--GNSAGA.
The second author was supported also by Universit\`a di Palermo (FFR2024 Pavone).

\end{document}